\theoremstyle{plain}
\newtheorem{thm}{Theorem}[section]
\newtheorem{lem}[thm]{Lemma}
\newtheorem{propn}[thm]{Proposition}
\newtheorem{cor}[thm]{Corollary}
\newtheorem{example}[]{Example}
\newtheorem{clm}[thm]{Claim}
\theoremstyle{definition}
\newtheorem{defn}[thm]{Definition}
\newtheorem{rem}[thm]{Remark}
\newcommand{\La}{\mathcal{L}}
\newcommand{\M}{\mathcal{M}}
\newcommand{\U}{\ensuremath \mathcal{U}}
\newcommand{\Z}{\mathbb{Z}}
\newcommand{\Prime}{\mathbb{P}}
\newcommand{\vphi}{\varphi}
\newcommand{\pint}{\mathbb{Z}_p}
\newcommand{\tp}{\mathrm{tp}}
\newcommand{\iso}{\cong}
\newcommand{\Qp}{\ensuremath \mathbb{Q}_p}
\newcommand{\indep}[1][]{%
  \mathrel{
    \mathop{
      \vcenter{
        \hbox{\oalign{\noalign{\kern-.3ex}\hfil$\vert$\hfil\cr
              \noalign{\kern-.7ex}
              $\smile$\cr\noalign{\kern-.3ex}}}
      }
    }\displaylimits_{#1}
  }
}
\def\vvdots{\vbox{\lineskiplimit=0pt 
\kern6pt \hbox{.}\hbox{.}\hbox{.}}}
\begin{document}
\title{Generalised stability of ultraproducts of finite residue rings}
\author{Ricardo Isaac Bello Aguirre}
\date{}
\begin{abstract}
We study ultraproducts of finite residue rings $\underset{n\in\mathbb{N}}{\prod}\Z/n\Z \diagup \mathcal{U} $ where $\mathcal{U}$ is a non-principal ultrafilter.
We find sufficient conditions of the ultrafilter $\mathcal{U}$ to determine if the resulting ultraproduct $\underset{n\in\mathbb{N}}{\prod}\Z/n\Z \diagup \mathcal{U}$ has simple, NIP, NTP2 but not simple nor NIP, or TP2 theory, noting that all these four cases occur.

\end{abstract}

\maketitle

\section{Introduction}
It is well known that any pseudofinite field has supersimple rank 1 theory. This follows e.g. from the main theorem and Proposition 4.11 of \cite{CDM}.
In this paper we investigate generalised stability properties of arbitrary pseudofinite residue rings. More specifically, we describe non-principal ultrafilters on $\mathbb{N}$ such that the ultraproduct $\prod \Z/n\Z \diagup \mathcal{U}$ is supersimple, or NIP but non-simple, or NTP2 but not NIP or simple, or TP2, noting that all these possibilities occur. The information is depicted in the following diagram.\newline

\begin{center}
\begin{tabular}{ | l | l | p{6cm}| }
\hline
\multirow{3}{2.7cm}{Bounded number of prime divisors} & \multicolumn{2}{l|}{Bounded exponents, (e.g. $\underset{p\in\Prime}{\prod} \Z / p^b \Z \diagup \mathcal{U}$, for fixed $b$) Simple case}\\\cline{2-3}
                                                & \multirow{2}{2cm}{Unbounded exponents} & {Bounded set of primes, \newline (e.g. $\underset{n\in\mathbb{N} }{\prod} \Z/q^n\Z \diagup \mathcal{U}$, for fixed prime $q$) NIP case} \\ \cline{3-3}
                                               &  &  Unbounded set of primes, \newline(e.g. $\underset{p\in\Prime, n\in\omega}{\prod} \Z/p^n\Z \diagup \mathcal{U}$) NTP2 case \\
                                               \hline
\multicolumn{3}{|l|}{Unbounded number of prime divisors, (e.g. $\underset{n\in\omega}{\prod} \Z/n\Z \diagup \mathcal{U}$)  TP2 case}  \\
\hline
\end{tabular}
\end{center}

We now state the main theorems that make up this paper and indicate where can they be found inside the paper.

\begin{thm}[Corollary \ref{NIPcase}]\label{theoremnip}
Let $\mathcal{U}$ be a non-principal ultrafilter on $\mathbb{N}$ and let $U\in \mathcal{U}$ and $b\in \mathbb{N}$ be such that every $n\in U$ is a product of powers of fewer than $b$ primes each prime being less than $b$. Then $\underset{n\in\mathbb{N}}{\prod} \Z/ n\Z\diagup \mathcal{U} $ is NIP.
\end{thm}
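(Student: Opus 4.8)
The plan is to reduce, by the Chinese Remainder Theorem, to a \emph{fixed finite} direct product of ultraproducts of finite chain rings $\Z/p^e\Z$, and then to identify each such factor with a parametrically definable quotient of a ring elementarily equivalent to $\Z_p$, which is NIP; NIP then transfers along interpretations and through the finite product.

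\textbf{Step 1: CRT reduction.} I would set $P=\{\,p\ \text{prime}:p<b\,\}$, a finite set $\{p_1,\dots,p_m\}$ which by hypothesis contains every prime dividing any $n\in U$. Since $U\in\mathcal U$ we may restrict the ultraproduct to indices $n\in U$, and for such $n$, writing $e_i(n)=v_{p_i}(n)\ (\ge 0)$, CRT gives $\Z/n\Z\cong\prod_{i=1}^m\Z/p_i^{e_i(n)}\Z$ uniformly in $n$ (same finite index set, $i$-th projection $x\mapsto x\bmod p_i^{e_i(n)}$ given by one formula). As a $\mathcal U$-ultraproduct commutes with a direct product over a fixed finite index set, this yields
\[
\prod_{n\in\mathbb N}\Z/n\Z\Big/\mathcal U\;\cong\;\prod_{i=1}^m R_i,\qquad R_i:=\prod_{n\in\mathbb N}\Z/p_i^{e_i(n)}\Z\Big/\mathcal U .
\]

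\textbf{Step 2: each $R_i$ is NIP.} Fix $i$ and write $p=p_i$, $e(n)=e_i(n)$. If the exponents are bounded on a set in $\mathcal U$, then a single value $c$ occurs $\mathcal U$-often and $R_i\cong\Z/p^c\Z$ is finite, hence NIP. Otherwise $e(n)\to\infty$ along $\mathcal U$, and I would argue as in the single-prime NIP case: consider the ultrapower ${}^{*}\Z_p=\prod_n\Z_p/\mathcal U$, which satisfies ${}^{*}\Z_p\equiv\Z_p$, and $\Z_p$ is NIP (e.g. because it is a definable subset of the NIP field $\Qp$). Putting $\varpi=(p^{e(n)})_n\in{}^{*}\Z_p$, reduction modulo $p^{e(n)}$ in each coordinate is a surjective ring homomorphism ${}^{*}\Z_p\to R_i$, and I would check that its kernel is exactly the principal ideal $\varpi\cdot{}^{*}\Z_p$, so $R_i\cong{}^{*}\Z_p\big/\varpi\,{}^{*}\Z_p$. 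Thus $R_i$ is a quotient of the NIP ring ${}^{*}\Z_p$ by an ideal definable over the parameter $\varpi$, so $R_i$ is interpretable in ${}^{*}\Z_p$ and therefore NIP.

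\textbf{Step 3: assembling, and the main obstacle.} Finally, $\prod_{i=1}^m R_i$ is interpretable in the disjoint union $\bigsqcup_{i=1}^m R_i$ (tuples $(a_1,\dots,a_m)$, ring operations coordinatewise), which is NIP because a finite disjoint union of NIP structures is NIP (a standard preservation fact); hence $\prod_{i=1}^m R_i\cong\prod_n\Z/n\Z/\mathcal U$ is NIP. I expect the only point needing real care to be the unbounded-exponent part of Step 2: verifying that the kernel above is precisely $\varpi\,{}^{*}\Z_p$ and that this ideal is definable, so that NIP genuinely transfers from $\Z_p$ to $R_i$ — equivalently, that the ultraproduct of the $\Z/p^{e(n)}\Z$ with $e(n)\to\infty$ really is the parametric quotient ${}^{*}\Z_p/\varpi\,{}^{*}\Z_p$. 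Steps 1 and 3 are bookkeeping, using only uniform CRT, that ultraproducts commute with finite products, and that NIP is preserved under interpretations and finite disjoint unions.
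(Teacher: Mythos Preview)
Your proof is correct and follows essentially the same route as the paper: CRT reduction (the paper's Lemma~\ref{CRT}) to finitely many single-prime ultraproducts, each of which is NIP via interpretation in a $p$-adic ring (the paper's Proposition~\ref{nip}), and then NIP preserved under finite products. The only cosmetic differences are that the paper interprets $\Z/p^e\Z$ in $\Qp$ using the valuation (defining $p^e\Z_p$ as $\{x:v(x)\ge v(a)\}$) rather than in ${}^{*}\Z_p$ via the principal ideal $\varpi\,{}^{*}\Z_p$, and that the paper first pigeonholes to a set $V\in\mathcal U$ on which all $n$ share the \emph{same} set of prime divisors, whereas you keep all primes below $b$ and allow zero exponents, handling the bounded-exponent case separately.
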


\begin{thm}[Corollary \ref{NTP2case}]\label{theoremntp2}
Let $\U$ be an ultrafilter on $\mathbb{N}$ such that there exists $b\in\mathbb{N}$ and $U\in\U$ such that for  every $n\in U$ there are at most $b$ prime divisors. Then $\underset{n\in\mathbb{N}}{\prod} \Z/n\Z\diagup \mathcal{U}$ is NTP2.

\end{thm}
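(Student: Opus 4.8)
The plan is to reduce, via the Chinese Remainder Theorem, to ultraproducts of the finite local rings $\Z/p^a\Z$, then to interpret each such ultraproduct inside a henselian valued field and finish by an Ax--Kochen--Ershov type transfer principle for NTP2, together with the stability of NTP2 under finite products. Concretely, fix $U\in\U$ and $b$ as in the statement. For each $n\in U$ write $n=\prod_{j=1}^{b}p_{n,j}^{a_{n,j}}$, where $p_{n,1},\dots,p_{n,b}$ enumerates the prime divisors of $n$ with exponents $a_{n,j}\ge 1$, padded out by dummy entries $p_{n,j}=2,\ a_{n,j}=0$ (whose factor is the zero ring) so that there are exactly $b$ slots. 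The Chinese Remainder Theorem gives ring isomorphisms $\Z/n\Z\cong\prod_{j=1}^{b}\Z/p_{n,j}^{a_{n,j}}\Z$ for all $n\in U$, and since $U\in\U$ and ultraproducts commute with finite products this yields
\[
\prod_{n\in\mathbb N}\Z/n\Z\diagup\U\ \cong\ \prod_{j=1}^{b}R_j,\qquad R_j:=\prod_{n\in U}\Z/p_{n,j}^{a_{n,j}}\Z\diagup\U .
\]
Thus it suffices to show that each $R_j$ is NTP2 and that a finite product of NTP2 rings is NTP2.

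Fix $j$ and abbreviate $R=R_j$, $p_n=p_{n,j}$, $a_n=a_{n,j}$, so $R=\prod_n\Z/p_n^{a_n}\Z\diagup\U$. I would use that $\Z/p^a\Z\cong\Z_p/p^a\Z_p$ is the quotient of the valuation ring of $\Qp$ by the ideal of elements of value at least $a$. Let $K=\prod_n\Qp\diagup\U$, a henselian valued field (henselianity being an elementary scheme), with valuation $v$, valuation ring $\mathcal O=\prod_n\Z_{p_n}\diagup\U$, value group $\Gamma=\prod_n\Z\diagup\U$, residue field $k=\prod_n\mathbb F_{p_n}\diagup\U$, and put $\gamma=v\bigl([\,p_n^{a_n}\,]_\U\bigr)\in\Gamma$. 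By {\L}o\'s's theorem $\mathcal O\big/\{x\in\mathcal O:v(x)\ge\gamma\}\cong\prod_n\bigl(\Z_{p_n}/p_n^{a_n}\Z_{p_n}\bigr)\diagup\U=R$, and the ring on the left is interpretable in the valued field $(K,v)$ expanded by a constant for $\gamma$ in the value-group sort. Since NTP2 is preserved under passing to imaginary sorts and hence under interpretations, it is enough to prove that $(K,v)$ with this distinguished element $\gamma$ is NTP2.

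If some fixed prime $p$ has $\{n:p_n=p\}\in\U$ then $K$ is an ultrapower of $\Qp$, so $\Th(K,v)=\Th(\Qp)$ is NIP; its value group is a $\Z$-group and stays NIP after naming $\gamma$, so $(K,v,\gamma)$ is NIP, hence NTP2 (in this case $R$ is in fact NIP, cf.\ Theorem~\ref{theoremnip}). Otherwise $p_n\to\infty$ along $\U$, so $K$ has equicharacteristic $0$: its residue field $k=\prod_n\mathbb F_{p_n}\diagup\U$ is a pseudofinite field, hence supersimple of rank $1$ and a fortiori NTP2, while its value group $\Gamma$ is an ordered abelian group, hence NIP, and remains NIP after naming $\gamma$. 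One then invokes the transfer principle for NTP2 in equicharacteristic-zero henselian valued fields --- the NTP2 analogue of the Delon / Gurevich--Schmitt transfer for NIP, now applied over the parameter $\gamma$ --- to conclude that $(K,v,\gamma)$, and therefore $R$, is NTP2. Isolating (or, absent a citable statement, re-deriving via a relative quantifier elimination for $(K,v,\gamma)$) precisely this transfer, in a form robust enough to tolerate the named parameter, is the step I expect to be the main obstacle; the Chinese Remainder reduction and the positive residue characteristic case are routine by comparison.

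Finally, for the passage from the $R_j$ to their product: the ring $\prod_{j=1}^{b}R_j$ is interpretable in the multi-sorted structure given by the $R_j$ in disjoint copies of the ring language, since ring operations and equality on the product act coordinatewise. Hence it remains only to use that a disjoint union of finitely many NTP2 structures in disjoint languages is NTP2, a standard preservation property of NTP2 (as it is for NIP and for simplicity). Combining this with the previous paragraphs gives that $\prod_{n\in\mathbb N}\Z/n\Z\diagup\U$ is NTP2.
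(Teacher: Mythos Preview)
Your argument is essentially the same as the paper's: reduce via the Chinese Remainder Theorem to a finite product of ultraproducts of rings $\Z/p^a\Z$, interpret each such ultraproduct in an ultraproduct of $p$-adic fields with a distinguished parameter, and invoke Chernikov's NTP2 transfer (the paper's Proposition~\ref{Chernikov}) together with closure of NTP2 under finite products/disjoint unions. The only cosmetic differences are that you pad with trivial factors to get exactly $b$ slots, whereas the paper passes to a set in the ultrafilter on which the number of prime divisors is constant, and that you do an explicit case split on whether the primes $p_n$ stay bounded (getting NIP via $\Qp$) or go to infinity (getting equicharacteristic $(0,0)$); the paper bundles both cases into its Proposition~\ref{ntp2}.

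Your flagged ``main obstacle'' is not one. Naming a constant $\gamma$ cannot create TP2: if $\vphi(x;\bar y,\gamma)$ had TP2 in $(K,v,\gamma)$, then $\vphi(x;\bar y,z)$ with $\gamma$ absorbed into the parameter tuple would already witness TP2 in $(K,v)$, since the definition of TP2 is in terms of formulas with arbitrary parameters. So once Chernikov's transfer gives that the ultraproduct $(K,\Gamma,k,v,ac)$ is NTP2 (residue field pseudofinite hence simple, value group ordered abelian hence NIP), the expansion by $\gamma$ is automatically NTP2, and your interpretation of $R_j$ goes through without any further work. You can therefore drop the hedging in that paragraph.
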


\begin{thm}[Corollary \ref{simplecase}]\label{theoremsimple}
Consider the collection of rings of the form $\Z /n \Z$ with $n\in\mathbb{N}$. Consider also an ultrafilter on $\mathbb{N}$ such that there exists $b\in\mathbb{N}$ and $ U\in \mathcal{U}$ such that if $n\in U$ then $n$ is a product of fewer than $b$ primes and if $p^l$ divides $n$ then $l<b$. Then $\underset{n\in\mathbb{N}}{\prod} \Z/n\Z \diagup \mathcal{U}$ is supersimple.
\end{thm}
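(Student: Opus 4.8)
The plan is to split the ultraproduct, via the Chinese Remainder Theorem, into a finite product of ultraproducts of finite chain rings of \emph{bounded length}, to identify each such factor with a ring of Witt vectors of bounded length over a pseudofinite field, and then to combine the known supersimplicity of pseudofinite fields with the facts that supersimplicity together with finiteness of $\mathrm{SU}$-rank is preserved under interpretation and under finite products of structures. First, since $\U$ concentrates on $U$ and, for $n\in U$, both the number $k$ of distinct prime divisors of $n$ and every exponent occurring in $n$ are $<b$, only finitely many factorisation ``shapes'' $(a_1,\dots,a_k)$ occur (with $k<b$ and each $a_i<b$); partitioning $U$ into the corresponding finitely many pieces, exactly one piece $U'$ lies in $\U$, and on $U'$ the shape is constant: there are fixed $k<b$ and $a_1,\dots,a_k<b$ with $n=q_1(n)^{a_1}\cdots q_k(n)^{a_k}$ for every $n\in U'$, where $q_1(n)<\dots<q_k(n)$ enumerates the prime divisors of $n$ in increasing order. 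Replacing $\U$ by its trace on $U'$ does not change the ultraproduct, so we may assume every index $n$ has this form.

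By the Chinese Remainder Theorem $\Z/n\Z\cong\prod_{i=1}^{k}\Z/q_i(n)^{a_i}\Z$ coordinatewise, and since ultraproducts commute with a fixed finite product this gives a ring isomorphism
\[
\prod_{n}\Z/n\Z\diagup\U\ \cong\ R_1\times\cdots\times R_k,\qquad
R_i:=\prod_{n}\Z/q_i(n)^{a_i}\Z\diagup\U .
\]
Fix $i$ and write $a:=a_i$. For every prime $p$ one has $\Z/p^{a}\Z\cong W_{a}(\mathbb{F}_p)$, the ring of Witt vectors of length $a$; the Witt addition and multiplication polynomials have fixed integer coefficients independent of $p$, so there is a single parameter-free interpretation $\Gamma$ (in the language of rings) of ``the ring of Witt vectors of length $a$'' satisfying $\Gamma(\mathbb{F}_p)\cong\Z/p^{a}\Z$ for all $p$. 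As interpretations commute with ultraproducts,
\[
R_i\ \cong\ \prod_{n}\Gamma(\mathbb{F}_{q_i(n)})\diagup\U\ \cong\ \Gamma(K_i),\qquad
K_i:=\prod_{n}\mathbb{F}_{q_i(n)}\diagup\U ,
\]
so $R_i$ is interpretable in the pseudofinite field $K_i$. By the main theorem and Proposition~4.11 of \cite{CDM}, $K_i$ is supersimple of $\mathrm{SU}$-rank $1$, and supersimplicity with finiteness of $\mathrm{SU}$-rank passes to structures interpretable in $K_i$; hence each $R_i$ is supersimple of finite rank.

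It remains to assemble the product. The ring $R_1\times\cdots\times R_k$ is interpretable, without parameters, in the multi-sorted structure with sorts $R_1,\dots,R_k$ (its universe is the product of the sorts and its ring operations act coordinatewise), which in turn is interpretable in the multi-sorted structure with sorts $K_1,\dots,K_k$ and no relations between distinct sorts --- the disjoint union of the $K_i$ --- because each $R_i\cong\Gamma(K_i)$ is interpretable in $K_i$. A disjoint union of finitely many supersimple structures is supersimple, with $\mathrm{SU}$-rank the (finite) sum of the ranks. Therefore $\prod_n\Z/n\Z\diagup\U$ is supersimple, as required.

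The genuine content lies in the last two steps: the Witt-vector presentation must be uniform in $p$ so that {\L}o{\'s}'s theorem applies across the (unbounded) primes $q_i(n)$, and --- more substantially --- one must check that forking in $R_1\times\cdots\times R_k$ decomposes coordinatewise, i.e.\ that simplicity, supersimplicity and finiteness of $\mathrm{SU}$-rank are preserved under finite products (equivalently, finite disjoint unions) of structures. By contrast the reduction to a fixed shape and the Chinese Remainder decomposition are bookkeeping, once one fixes a uniform indexing of the prime divisors (here, by size) so that $\Z/n\Z\cong\prod_i\Z/q_i(n)^{a_i}\Z$ is honestly coordinatewise and {\L}o{\'s}'s theorem applies to it.
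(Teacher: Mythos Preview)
Your reduction to a fixed factorisation shape and the Chinese Remainder decomposition into a finite product $R_1\times\cdots\times R_k$ of ultraproducts $R_i=\prod_n \Z/q_i(n)^{a_i}\Z\diagup\U$ is exactly what the paper does, and your treatment of preservation of supersimplicity under finite products (via disjoint unions) is also correct and matches the paper.

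The gap is in the Witt-vector step. Your claim that ``the Witt addition and multiplication polynomials have fixed integer coefficients independent of $p$'' is false: the $p$-typical Witt polynomials do have integer coefficients, but the polynomials themselves depend on $p$. Already for length $2$, the addition law is
\[
S_1 \;=\; X_1+Y_1-\sum_{i=1}^{p-1}\tfrac{1}{p}\binom{p}{i}X_0^{\,i}Y_0^{\,p-i},
\]
so $S_1$ has degree $p$, which grows without bound as $p$ varies. There is therefore no single parameter-free ring-language interpretation $\Gamma$ with $\Gamma(\mathbb{F}_p)\cong \Z/p^{a}\Z$ uniformly in $p$, and your appeal to \L o\'s at that point fails.

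The paper avoids this in two different ways. Its main proof of the key input (Proposition~\ref{simple}) does not try to interpret $\Z/p^{b}\Z$ in $\mathbb{F}_p$ at all: it works inside the Denef--Pas valued field $\prod_p\Qp/\U$, where the residue field $\prod_p\mathbb{F}_p/\U$ is fully embedded, and coordinatises each element of $\prod_p \Z/p^{b}\Z/\U$ by a length-$b$ chain whose successive types live in the (supersimple) residue field; supersimplicity then follows from Proposition~\ref{coordinatisation}. The paper also sketches an alternative closer in spirit to yours (the Remark after Corollary~\ref{simplecase}): use Ax--Kochen--Er\v{s}ov to get $\prod_p \Z/p^{b}\Z/\U \equiv \prod_p \mathbb{F}_p[[t]]/t^{b}/\U$, and then observe that $\mathbb{F}_q[[t]]/t^{b}$ \emph{is} uniformly interpretable in $\mathbb{F}_q$ (the addition and multiplication formulas on $b$-tuples are fixed polynomials, independent of $q$). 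If you insert this AKE step in place of the Witt-vector claim, your argument goes through and becomes essentially the paper's alternative proof.
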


\begin{thm}[Corollary \ref{TP2case}]\label{theoremtp2}
Let $\mathcal{U}$ be an ultrafilter on $\mathbb{N}$ such that for every $b\in\mathbb{N}$ the set $$\{n\in\mathbb{N}:\ there\ are\ at\ least\ b\ distinct\ prime\ divisors\ of\ n\}$$ is in $\mathcal{U}$. Then $\underset{n\in\mathbb{N} }{\prod }\Z/n\Z \diagup \mathcal{U}$ has TP2 theory.
\end{thm}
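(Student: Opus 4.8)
The plan is to verify the tree property of the second kind directly, using the divisibility formula
$\varphi(x;y,z) := \exists w\,(z\,w = x-y)$
in the language of rings, on the ultraproduct $R := \prod_{n\in\mathbb{N}}\Z/n\Z\diagup\U$. For $n\in\mathbb{N}$ let $k_n$ be the number of distinct prime divisors of $n$, so the hypothesis says precisely that $\{n : k_n \ge b\}\in\U$ for all $b\in\mathbb{N}$. List the prime divisors of $n$ in decreasing order $p^{(n)}_1 > p^{(n)}_2 > \cdots > p^{(n)}_{k_n}$; for $i\in\omega$ let $q_i\in R$ have $n$-th coordinate $p^{(n)}_{i+1}$ if $i<k_n$ and, say, $1$ otherwise, and for $i,j\in\omega$ let $a_{i,j}\in R$ be the image of the integer $j$ in every coordinate. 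The claimed TP2 array is $\bigl(a_{i,j},q_i\bigr)_{i,j\in\omega}$.

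First I would record two facts about the $q_i$. (i) Each $q_i$ is \emph{non-standard}: for $m\in\omega$, if $p^{(n)}_{i+1}\le m$ then $n$ has at most $i$ prime divisors exceeding $m$ and at most $\pi(m)$ prime divisors $\le m$, hence $k_n\le i+\pi(m)$; since $\{n:k_n\le i+\pi(m)\}\notin\U$ by hypothesis, $\{n : (q_i)_n > m\}\in\U$, i.e. $R\models q_i>m$. (ii) For $i\ne i'$ the elements $q_i,q_{i'}$ are, on the $\U$-large set $\{n:k_n>\max(i,i')\}$, \emph{distinct} primes dividing $n$, hence coprime there. Both facts hinge only on the trivial arithmetic remark that, since there are only finitely many primes below any bound, an integer with many prime divisors must have large ones.

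Next I would check the two TP2 requirements. Each row is $2$-inconsistent: for $j\ne j'$, $\varphi(x;a_{i,j},q_i)\wedge\varphi(x;a_{i,j'},q_i)$ forces $q_i \mid a_{i,j}-a_{i,j'}$; reducing the $n$-th coordinate modulo the prime $p^{(n)}_{i+1}$ (a divisor of $n$) gives $p^{(n)}_{i+1}\mid j-j'$ in $\Z$, which fails whenever $p^{(n)}_{i+1}>|j-j'|$, and this holds on a set in $\U$ because $q_i$ is non-standard. Each path is consistent: given $f:\omega\to\omega$ and finite $F\subseteq\omega$, on the $\U$-large set $\{n:k_n>\max F\}$ the primes $p^{(n)}_{i+1}$, $i\in F$, are pairwise distinct divisors of $n$, so $\prod_{i\in F}p^{(n)}_{i+1}\mid n$ and the Chinese Remainder Theorem produces $x_n\in\Z/n\Z$ with $p^{(n)}_{i+1}\mid x_n-f(i)$ for every $i\in F$; the element $x\in R$ they define satisfies $\{\varphi(x;a_{i,f(i)},q_i):i\in F\}$ by \L o\'{s}'s theorem. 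Hence every finite subset of the path type is realised in $R\models\Th(R)$, so the path type is consistent. Thus $\varphi$ has TP2 and $\Th(R)$ has TP2.

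The main obstacle is that the hypothesis supplies $\U$-many $n$ with many prime divisors but gives no control on the \emph{size} of these primes, whereas the divisibility formula only yields an honestly inconsistent row when the prime involved exceeds the column indices in play. The resolution---and the one genuinely new idea in the argument---is to take for row $i$ the $(i+1)$-st \emph{largest} prime divisor of $n$: the bound $\pi(m)<\infty$ then forces this choice to be non-standard, and for varying $i$ pairwise distinct, along $\U$. After that, the inconsistency of rows and the consistency of paths are routine Chinese-Remainder-plus-\L o\'{s} verifications.
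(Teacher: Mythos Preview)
Your argument is correct, and it takes a genuinely different route from the paper's. The paper proceeds indirectly: it invokes a criterion from \cite{CKS} (Lemma~\ref{lemma} here) saying that in an NTP2 theory, any uniformly definable family $(H_i)_{i<\omega}$ of normal subgroups of a definable group must have some index $[H_{\neq i^*}:H]$ finite; it then builds such a family inside $G=SL_2(\mathcal{R})$ by taking, for each prime-power factor of $n$, the centraliser of the conjugacy class of a non-central diagonal matrix, and shows via an explicit matrix computation that all the relevant indices are unbounded along $\mathcal{U}$. Your proof bypasses both the \cite{CKS} lemma and $SL_2$ entirely: you exhibit TP2 directly for the divisibility formula $\varphi(x;y,z)\equiv z\mid x-y$, using as row parameters the $(i+1)$-st \emph{largest} prime divisor of $n$ so that each $q_i$ is automatically nonstandard and pairwise coprime to the others along $\mathcal{U}$, after which row $2$-inconsistency is immediate and path consistency is the Chinese Remainder Theorem.

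Your approach is shorter and more elementary, and it makes transparent exactly which arithmetic feature of $\mathcal{R}$ is responsible for TP2 (infinitely many pairwise coprime nonstandard divisors). The paper's approach has the complementary virtue of illustrating how the normal-subgroup criterion is deployed in practice, and of locating the failure of NTP2 already in a definable group, which may be of independent interest. One cosmetic point: the shorthand ``$R\models q_i>m$'' is not literally meaningful in the ring language; what you actually prove and use is that $\{n:(q_i)_n>m\}\in\mathcal{U}$, which is exactly what the inconsistency step requires.
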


In the second section we present both Theorem \ref{theoremnip} and Theorem \ref{theoremntp2}, concerning the NIP and NTP2 cases respectively.

In the third section we present Theorem \ref{theoremsimple} about the simple theory case, and also mention some results on coordinatisation, as found in \cite{HKP}.

Finally in the fourth section we present Theorem \ref{theoremtp2} for the TP2 case.

We heavily use definability of certain pseudofinite residue rings in well known valued fields. When handling valued fields, we will usually work in the multisorted Denef-Pas language of valued fields $\La_{valf}$ defined with more detail in Definition \ref{Denefpas}. 

We will write $\mathbb{P}$ for the set of prime numbers, $\mathcal{U}$, $\mathcal{V}$, $\mathcal{W}$, $\ldots$ for ultrafilters, and $U$, $V$, $W$, $\ldots$ for elements of ultrafilters. Also we write $x$, $y$, $\ldots$ for variables and  $a$, $b$, $\ldots$ for parameters. If we want to emphasise that a variable or parameter is a tuple we will write $\bar{x}$, $\bar{y}$ or $\bar{a}$, $\bar{b}$ respectively. 
When $\mathcal{U}$ is an ultrafilter on $I$ we often write $[a_i]_U$ for an element of $\underset{i\in I}{\prod} R_i /\mathcal{U}$; we may write $[a_i]$ if the ultrafilter $\mathcal{U}$ is clear from the context. Unless specified otherwise, ``$\mathcal{U}$ is an ultrafilter'' means $\mathcal{U}$ is a non-principal ultrafilter. For $V$ an element in $\mathcal{U}$ a (non-principal) ultrafilter we call $\{V\cap U: U\in\mathcal{U} \}$ the {\textit{induced ultrafilter on $V$ by $\mathcal{U}$}}.
We use $T$ to denote a complete first order theory and $\overline{\M}$ to denote a model $T$.

We present now the definition of the tree property of the second kind.

\begin{defn}\label{DefNTP2}
We say that an $\mathcal{L}$-formula $\vphi(x,y)$ has the \textit{tree property of the second kind}, in short TP2, if there are $\{ b_{i,j}: i,j<\omega\}$ in $\overline{\M}$ and $l<\omega$ such that:
\begin{enumerate}[i)]

\item The set $\{ \vphi(x,b_{i,j}):j\in \omega\}$ is $l$-inconsistent, for all $i\in\omega$;
\item For all $\xi\in\omega^\omega$ the set $\{ \vphi(x,b_{i,\xi(i)}): i\in \omega\}$ is consistent.

\end{enumerate}
We say a theory {\textit{$T$ is NTP2}}, or does not have TP2, if no formula satisfies TP2.
\end{defn}

By Definition 3.1 of \cite{CHE} the class of TP2 theories is the same as the class given by Definition \ref{DefNTP2} with $l=2$ in i).

\begin{example}
The theory of the generic ultrahomogeneous triangle-free graph has the tree property of the second kind.

\end{example}

A proof is given in Example 3.13 of \cite{CHE}.

\begin{defn}
An $\mathcal{L}$-formula $\vphi(x,y)$ has the \textit{independence property} for $T$ if there are $(a_i)_{i\in\omega}$ and $(b_I)_{I\subseteq \omega}$ in $\overline{\M}$ such that $\overline{\M}\models \vphi(a_i,b_I)$ if and only if $i\in I$.

We say that a theory is \textit{NIP} if no formula satisfies the independence property.
\end{defn}

\begin{defn}
We say that a formula $\psi(x,y)$ has the \textit{tree property} for $T$ if there are $(a_{\eta})_{\eta\in\omega^{< \omega}}$ and some $k\geq 2$ such that:
\begin{enumerate}[a)]
\item For every $\delta\in\omega^{<\omega}$ the set of formulas $\{\psi(x,a_{\delta\frown l}): l \in \omega \}$ is $k$-inconsistent;
\item If $\eta\in\omega^\omega$ then the set $\{\psi(x, a_{\eta\upharpoonright n}): n\in\omega \}$ is consistent.
\end{enumerate}
A theory is said to be \textit{simple} if no formula has the tree property.
\end{defn}

The class of NTP2 theories is a simultaneous generalisation of both the classes of NIP and simple theories.
\begin{rem}
Any simple theory is NTP2. Any NIP theory is NTP2.
\end{rem}

By Example 7.7 of \cite{CHE}, any ultraproduct $\underset{p\in\Prime}{\prod} \Qp /\mathcal{U} $ of $p$-adic fields, where $\mathcal{U}$ is a non-principal ultrafilter on $\Prime$, has NTP2 theory.
This follows from the more general AKE-like result found in Theorem 7.6 in \cite{CHE}. Paraphrased, this says the following.

\begin{propn} [\cite{CHE} Theorem 7.6]\label{Chernikov}
Let $\overline{K}=(K,\Gamma,k,v:K\rightarrow\Gamma,ac:K\rightarrow k)$ be a Henselian valued field of characteristic $(0,0)$ in the Denef-Pas language. Then the height $\lambda$ of an array of parameters $\{\bar{b}_{i,j}: j<\omega, i<\lambda \} $ in $K$ which satisfies clauses i) and ii) from Definition \ref{DefNTP2} is less than the height of arrays of parameters either in $k$ or in $\Gamma$ which satisfies clauses i) and ii) from Definition \ref{DefNTP2}.

In particular we have that if $k$ has NTP2 theory in $\La_{rng}$, then so does $\overline{K}$.
\end{propn}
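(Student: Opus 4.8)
The plan is to reduce everything, via Denef--Pas quantifier elimination, to the two ``coordinate'' sorts $k$ and $\Gamma$, and then to invoke the standard mutually-indiscernible-array analysis of burden (inp-rank). Recall that a Henselian valued field of residue characteristic $0$, presented in $\La_{valf}$, admits elimination of valued-field quantifiers: every $\La_{valf}$-formula is equivalent to a Boolean combination of (a) formulas $\psi\bigl(ac(\bar p(\cdot))\bigr)$ with $\psi$ a formula of the residue-field sort and $\bar p$ a tuple of polynomials over $\Z$, (b) formulas $\chi\bigl(v(\bar q(\cdot))\bigr)$ with $\chi$ a formula of the value-group sort and $\bar q$ a tuple of polynomials over $\Z$, and (c) atomic equalities between polynomials; moreover $k$ and $\Gamma$ are stably embedded and mutually orthogonal, and carry exactly their induced structures as a ring and as an ordered abelian group. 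In particular the definable subsets of $K$ in one valued-field variable are Boolean combinations of points, ``balls'' (pulled back from $\Gamma$ through valuations of differences of polynomials), and preimages of $k$-definable sets under $ac$ of polynomials, so each such set is controlled by $k$ and $\Gamma$.

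First I would take an array $\{\bar b_{i,j}:j<\omega,\ i<\lambda\}$ from $K$ realising clauses i) and ii) of Definition \ref{DefNTP2} for a fixed formula, and after the quantifier elimination above (any coordinate of $x$ or of the $\bar b_{i,j}$ lying in $k$ or $\Gamma$ immediately pushes the configuration into that sort, so we may assume all are valued-field tuples) run a Ramsey and compactness extraction: pass to a sub-array that is mutually indiscernible and, within each row $i$, identify a single conjunct of the normal form responsible for that row's $l$-inconsistency. After discarding the finitely many rows whose governing conjunct is of type (c) --- these are controlled by the pure field geometry of $K$ over a small set, a one-variable configuration of roots of bounded-degree polynomials, which by a Helly-type argument sustains only finitely deep inp-patterns --- each remaining row is governed by a clause of type (a), a \emph{$k$-row}, or of type (b), a \emph{$\Gamma$-row}.

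Next I would project. On a $k$-row the map $x\mapsto ac\bigl(\bar p(x,\bar b_{i,j})\bigr)$ carries any realisation of a consistent intersection along a path to a realisation of the corresponding residue-field pattern, so the $k$-rows together with mutual indiscernibility yield a genuine inp-pattern in $k$ of the same height; dually the $\Gamma$-rows give an inp-pattern in $\Gamma$ of the same height, and the orthogonality of $k$ and $\Gamma$ prevents the two projected patterns from interfering. Hence $\lambda$ is bounded in terms of the heights of inp-patterns realisable in $k$ and in $\Gamma$, which is the assertion. For the final clause, $\Gamma$ is an ordered abelian group and therefore NIP, hence NTP2, so it sustains no inp-pattern of height $\omega$ using a single formula; if $\overline{K}$ had TP2 via some $\vphi$ the argument above would split the resulting height-$\omega$ one-formula pattern into a $k$-part and a $\Gamma$-part, one of height $\omega$, which would then have to be the $k$-part, contradicting that $\Th(k)$ is NTP2.

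The main obstacle is the extraction step: one must preserve mutual indiscernibility while pruning each row down to a single governing conjunct, and then verify that both the row-wise $l$-inconsistency in clause i) and the path-wise consistency in clause ii) survive the projection to $k$ and $\Gamma$. Handling the mixed sorts of the parameters, the value-group terms $v(\bar q(x,\bar b_{i,j}))$, and the degenerate equality clauses of type (c) uniformly --- so that what one is left with really is a disjoint superposition of an inp-pattern in $k$ and one in $\Gamma$ --- is the delicate part, and is where orthogonality and stable embeddedness of the two sorts are used in full.
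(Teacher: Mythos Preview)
The paper does not prove this proposition at all: it is quoted as a black box from \cite{CHE} (Theorem~7.6 there), and the surrounding text merely paraphrases the statement and records the consequence that $\prod_p\mathbb{Q}_p/\mathcal{U}$ is NTP2. So there is no in-paper proof to compare your proposal against.

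That said, your sketch is a reasonable outline of Chernikov's actual argument. The overall architecture --- Denef--Pas field-quantifier elimination, extraction of a mutually indiscernible array, row-by-row isolation of a single conjunct responsible for inconsistency, separation into $k$-rows and $\Gamma$-rows via stable embeddedness and orthogonality, and then projecting to obtain inp-patterns in $k$ and in $\Gamma$ --- is correct in spirit. Two points you could sharpen: first, the treatment of the type-(c) clauses (field equalities) is not quite ``discard finitely many rows''; one argues that such a row forces the witness $x$ into the algebraic closure of the parameters, which cannot coexist with infinitely many inconsistent instances in other rows, so these rows are genuinely eliminated rather than merely finite in number. Second, the final clause is not quite what you wrote: NIP for $\Gamma$ does not by itself bound the height of single-formula inp-patterns in $\Gamma$ below $\omega$ (NIP theories can have infinite burden); what is actually used is that $\Gamma$ is NTP2 because it is NIP, so neither projected sort can support a TP2 array, and hence neither can $\overline{K}$. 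With those adjustments your sketch matches the cited source; the paper itself contributes nothing further here.
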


Observe that in Proposition \ref{Chernikov}, if we consider $\overline{K}:=(\prod\mathbb{Q}_p/\mathcal{U},\Gamma, k, v, ac)$ then $\overline{K}$ is strictly NTP2, in the sense that since $k$ is a pseudofinite field then it has IP and so $\overline{K}$ has IP, and $\Gamma$ has SOP so $\overline{K}$ has SOP.

\noindent\textbf{Acknowledgements.}
I would like to thank greatly Dugald Macpherson for his supervision during this project, and Anand Pillay for his very helpful comments and suggestions around this work, specially around using coordinatisability in section 3. I would like to also thank CONACYT for the financial support that allowed this work.

\section{NIP, and NTP2 cases}

First, for a fixed prime $p$ we consider the ring $\underset{n\in\mathbb{N} }{\prod} \Z/p^n\Z \diagup \mathcal{U}$.

\begin{propn}\label{nip}
Fix a prime $p$. Ultraproducts of the form $\underset{n\in\mathbb{N} }{\prod} \Z/p^n\Z \diagup \mathcal{U}$ with $\mathcal{U}$ a non-principal ultrafilter on $\mathbb{N}$ are interpretable in $\underset{n\in\mathbb{N}}{\prod}\Qp \diagup \mathcal{U}$ and hence are NIP.
\end{propn}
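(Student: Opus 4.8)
The plan is to exhibit $\Z/p^n\Z$ uniformly as a definable quotient inside $\Qp$ (with its valuation), transfer this uniform interpretation to the ultraproducts via {\L}o\'s's theorem, and then invoke NIP-preservation under interpretation together with the known NIP of $\prod_n\Qp/\mathcal{U}$. First I would observe that in $\Qp$, equipped with the valuation $v$ (or equivalently in the Denef--Pas language with value group $\Z$), the valuation ring $\Z_p$ is defined by $v(x)\geq 0$ and, for each fixed $n$, the ideal $p^n\Z_p$ is defined by $v(x)\geq n$. Hence $\Z/p^n\Z \cong \Z_p/p^n\Z_p$ is interpreted, in $\Qp$, as the quotient of the definable set $\{x: v(x)\geq 0\}$ by the definable equivalence relation $v(x-y)\geq n$, with addition and multiplication induced from $\Qp$. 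The key point is that this interpretation is \emph{uniform in $n$}: the single formula $v(x-y)\geq n$ makes sense once $n$ is an element of the value group, and the ring operations do not depend on $n$ at all.

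Next I would pass to the ultraproducts. Working in $\prod_n \Qp/\mathcal{U}$ in the Denef--Pas language, the value group is $\prod_n \Z/\mathcal{U}$, which contains the element $\nu := [n]_{\mathcal U}$. The same formula $v(x-y)\geq \nu$, restricted to the valuation ring $\{x: v(x)\geq 0\}$ of $\prod_n\Qp/\mathcal{U}$, defines (again by {\L}o\'s) an equivalence relation whose quotient ring is $\prod_n (\Z_p/p^n\Z_p)/\mathcal{U} \cong \prod_n (\Z/p^n\Z)/\mathcal{U}$. Thus $\prod_n \Z/p^n\Z/\mathcal{U}$ is interpretable in $\prod_n\Qp/\mathcal{U}$, using the parameter $\nu$ from the value-group sort.

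Finally, NIP is preserved under interpretation (with parameters) and under passing to a reduct, so it suffices that $\prod_n\Qp/\mathcal{U}$ is NIP. This is standard: each $\Qp$ is NIP (Delon; the theory $p$CF is NIP), NIP is preserved under ultraproducts, so $\prod_n\Qp/\mathcal{U}$ is NIP, hence so is the interpreted ring. The main obstacle, and the only real content, is making sure the interpretation is genuinely \emph{uniform} — that a \emph{single} formula (with $n$ promoted to a value-group variable) defines the congruence $p^n\Z_p$ across all $n$ simultaneously, so that {\L}o\'s's theorem applies to yield exactly the desired quotient $\prod_n(\Z/p^n\Z)/\mathcal{U}$ rather than some other value-group-indexed quotient; once the Denef--Pas set-up is in place this is essentially immediate, but it is worth stating explicitly which formulas are used for the valuation ring and the congruence.
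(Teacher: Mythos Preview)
Your proposal is correct and follows essentially the same route as the paper: uniformly interpret $\Z_p/p^n\Z_p$ in $\Qp$ via the valuation, pass to the ultrapower, and use that NIP is preserved under interpretation. The only cosmetic differences are that the paper takes its uniformising parameter in the field sort (an $a\in\Qp$ with $v(a)=n$, so $p^n\Z_p=\{x\in\Z_p:v(x)\ge v(a)\}$) rather than in the value-group sort, and that your phrase ``NIP is preserved under ultraproducts'' should really read ``an ultrapower of $\Qp$ is elementarily equivalent to $\Qp$, hence NIP'' --- the general statement is false (pseudofinite fields are ultraproducts of NIP structures with IP), but here you are taking an ultrapower, so the conclusion stands.
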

\begin{proof} We first recall that $\mathbb{Q}_p$ has NIP theory, c.f. \cite{BEL}, \cite{DEL} or \cite{MAT}. We will show that uniformly in $n$, $\Z / p^n\Z$ is interpretable in $\mathbb{Q}_p$, the $p$-adic numbers. We know that the valuation ring, $\Z_p$ is definable inside the valued field $\mathbb{Q}_p$. Also we can use a parameter $a\in\Qp$ with $v(a)=n$ to define $p^n\Z_p$, since $p^n\Z_p=\{x\in\Z_p:v(x)\geq v(a)\}$. Hence the structure $\Z_p/p^n\Z_p\iso \Z/p^nZ$ is interpretable in $\mathbb{Q}_p$, uniformly in $n$ (a parameter varying through $\Z$). Furthermore the ultraproduct $\underset{n\in\mathbb{N} }{\prod} \Z/p^n\Z \diagup \mathcal{U}$ is interpretable by the same formula in the ultrapower $\underset{n\in\mathbb{N}}{\prod}\Qp \diagup \mathcal{U}$ which is still NIP. Since being NIP is preserved under interpretability we conclude that ultraproducts of the form $\underset{n\in\mathbb{N} }{\prod} \Z/p^n\Z \diagup \mathcal{U}$ are NIP.
\end{proof}

We present now a lemma that will be useful further on. Here for $j$ in an index set $J$, and a collection of structures $(A_j)_{j\in J}$ then $\pi_j$ is the usual projection map from $\underset{k\in J}{\prod} A_k$ to $A_j$. We extend this notation to ultrafilters, i.e. if we consider $\mathcal{U}$ an ultrafilter on $\prod I_k$  we will denote by $\pi_j(\mathcal{U})$ the ultrafilter $\{V\subseteq I_j: \exists U\in\mathcal{U}( \pi_j(U)= V ) \}$ on $I_j$.

\begin{lem}\label{CRT}
Let $\{I_k\}_{k<n}$ be a family of index sets, for each $I_k$ let $\{R^k_{i} \}_{i\in I_k}$ be a family of rings indexed by $I_k$, and $\mathcal{U}$ an ultrafilter on $\underset{k<n}{\prod} I_k$.
 Then $$\underset{(i_0,\ldots,i_{n-1})}{\prod} (R^0_{i_0}\times\cdots\times R^{n-1}_{i_{n-1}})\diagup \mathcal{U}\iso (\underset{i_0}{\prod}R^0_{i_0}\diagup \pi_0(\mathcal{U})) \times\cdots\times (\underset{i_{n-1}}{\prod}R^{n-1}_{i_{n-1}}\diagup \pi_{n-1}(\mathcal{U}) ).$$
\end{lem}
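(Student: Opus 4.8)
The plan is to build the isomorphism directly on representatives and check it is well-defined, bijective, and a ring homomorphism. First I would fix notation: write an element of $\prod_{(i_0,\ldots,i_{n-1})} (R^0_{i_0}\times\cdots\times R^{n-1}_{i_{n-1}})$ as a tuple $(a_{(i_0,\ldots,i_{n-1})})$ where each coordinate $a_{(i_0,\ldots,i_{n-1})} = (a^0_{(i_0,\ldots,i_{n-1})},\ldots,a^{n-1}_{(i_0,\ldots,i_{n-1})})$ lies in the product ring. The natural candidate map sends $[a_{(i_0,\ldots,i_{n-1})}]_{\mathcal{U}}$ to the tuple whose $k$-th entry is the class, modulo $\pi_k(\mathcal{U})$, of the sequence $i_k \mapsto a^k_{(i_0,\ldots,i_{n-1})}$ — but this is the crux of the matter, because that expression is not literally a function of $i_k$ alone. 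The right way to say it: given $[a]_{\mathcal{U}}$, for each $k<n$ pick $U\in\mathcal{U}$ and a function $b^k\colon \pi_k(U)\to \bigcup_i R^k_i$ with $b^k(\pi_k(x)) \in R^k_{\pi_k(x)}$ for all $x\in U$ and $b^k(\pi_k(x)) = a^k_x$ for $\mathcal{U}$-almost all $x$; the output is $([b^0]_{\pi_0(\mathcal{U})},\ldots,[b^{n-1}]_{\pi_{n-1}(\mathcal{U})})$.

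Next I would verify well-definedness and independence of choices. If $[a]_{\mathcal{U}} = [a']_{\mathcal{U}}$, the set $W$ on which they agree lies in $\mathcal{U}$, so for each $k$ the function $b^k$ chosen for $a$ and the function $(b')^k$ chosen for $a'$ agree on $\pi_k(U\cap U'\cap W)$, and since $\pi_k$ of a $\mathcal{U}$-large set lies in $\pi_k(\mathcal{U})$ by definition, $[b^k]_{\pi_k(\mathcal{U})} = [(b')^k]_{\pi_k(\mathcal{U})}$. (The same computation shows the choice of extension $b^k$ off the agreement set is irrelevant.) That the map respects $+$ and $\times$ is componentwise and immediate once well-definedness is in place, since addition and multiplication in both the big ultraproduct and each factor ultraproduct are computed coordinatewise. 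Surjectivity: given classes $[c^k]_{\pi_k(\mathcal{U})}$ for $k<n$, each $c^k$ is defined on some $V_k \in \pi_k(\mathcal{U})$, so there is $U_k\in\mathcal{U}$ with $\pi_k(U_k) \subseteq V_k$; intersecting over $k<n$ (finite!) gives $U\in\mathcal{U}$, and on $U$ the tuple $x\mapsto (c^0(\pi_0(x)),\ldots,c^{n-1}(\pi_{n-1}(x)))$ is a well-defined element of the big product mapping to the prescribed class.

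Injectivity is the one point needing a genuine (if small) argument, and is where I expect the main obstacle: I need that if $[b^k]_{\pi_k(\mathcal{U})} = 0$ for every $k<n$, then $[a]_{\mathcal{U}} = 0$. For each $k$ let $Z_k \subseteq I_k$ be the $\pi_k(\mathcal{U})$-large set on which $b^k$ vanishes; by definition of $\pi_k(\mathcal{U})$ there is $U_k\in\mathcal{U}$ with $\pi_k(U_k) = Z_k$, hence $U_k \subseteq \pi_k^{-1}(Z_k)$, so $\pi_k^{-1}(Z_k)\in\mathcal{U}$. Then $\bigcap_{k<n}\pi_k^{-1}(Z_k)\in\mathcal{U}$ (finite intersection), and on this set $a^k_x = b^k(\pi_k(x)) = 0$ for all $k$ for $\mathcal{U}$-almost all $x$, whence $[a]_{\mathcal{U}} = 0$. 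The only subtlety is bookkeeping the difference between "agrees with $a$" sets and "is large" sets, which the intersection over the finitely many coordinates $k<n$ resolves cleanly; finiteness of $n$ is used exactly here and in surjectivity, and the statement would genuinely fail for infinite products. I would close by remarking that the whole construction is the evident diagonal/projection map and that all verifications are the expected Łoś-style computations.
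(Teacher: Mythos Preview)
Your approach is the same as the paper's: write down the natural projection map and verify it is a ring isomorphism. The paper's proof is a single line naming this map, so your plan is far more detailed. However, there is a genuine gap at precisely the step you yourself flag as ``the crux of the matter,'' and it is not the step you later call the main obstacle.

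You correctly observe that $a^k_{(i_0,\dots,i_{n-1})}$ need not depend on $i_k$ alone, and you propose to fix this by choosing $b^k$ with $b^k(\pi_k(x))=a^k_x$ for $\mathcal{U}$-almost all $x$. You then proceed as though such a $b^k$ always exists, but it need not. Take $n=2$, $I_0=I_1=\mathbb{N}$, every $R^0_i=\mathbb{Z}$, and let $\mathcal{U}=\mathcal{V}\otimes\mathcal{W}$ be the Fubini product of two non-principal ultrafilters on $\mathbb{N}$ (so $A\in\mathcal{U}$ iff $\{i:\{j:(i,j)\in A\}\in\mathcal{W}\}\in\mathcal{V}$); set $a^0_{(i,j)}=j$. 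A function $b^0:\mathbb{N}\to\mathbb{Z}$ with $b^0(i)=j$ on a $\mathcal{U}$-large set would require $\{i:\{b^0(i)\}\in\mathcal{W}\}\in\mathcal{V}$, and this set is empty since $\mathcal{W}$ is non-principal. Thus your forward map is undefined on this $[a]_{\mathcal{U}}$; equivalently, the right-to-left map you describe under ``surjectivity'' is a well-defined injective ring homomorphism, but it is not onto here. The paper's one-sentence proof hides exactly the same issue behind the notation $[(a_{k_0})]_{\pi_0(\mathcal{U})}$, which tacitly treats the $0$-th component as a function of $k_0$ alone. What \emph{does} go through is that your right-to-left map is an elementary embedding on each factor (a routine \L o\'{s} computation using $\pi_k^{-1}(V)\in\mathcal{U}\Leftrightarrow V\in\pi_k(\mathcal{U})$), so the two sides are elementarily equivalent; and elementary equivalence, not isomorphism, is all the downstream applications to NIP, simplicity and NTP2 actually need.
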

\begin{proof}
We can show that the assignment $\vphi$ given by sending $[(a)_{(k_0,\ldots,k_{n-1})} ]_\mathcal{U}$ 
to $( [(a_{k_0})]_{\pi_0(\mathcal{U})}, \ldots, [(a_{k_{n-1}})]_{\pi_{n-1}(\mathcal{U})})$ is an isomorphism.\\
\end{proof}

\begin{cor}\label{NIPcase}
Let $\mathcal{U}$ be a non-principal ultrafilter on $\mathbb{N}$ and let $U\in \mathcal{U}$ and $b\in \mathbb{N}$ be such that every $n\in U$ is a product of powers of fewer than $b$ primes each prime being less than $b$. Then $\underset{n\in\mathbb{N}}{\prod} \Z/ n\Z\diagup \mathcal{U} $ is NIP.
\end{cor}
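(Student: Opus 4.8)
The plan is to reduce the statement to the single-prime NIP case already handled in Proposition \ref{nip}, using the Chinese Remainder decomposition packaged in Lemma \ref{CRT}. First I would pass to the element $U\in\mathcal{U}$ on which every $n$ is a product of powers of fewer than $b$ primes, all of those primes being less than $b$. Replacing $\mathcal{U}$ by the induced ultrafilter on $U$ does not change the ultraproduct $\underset{n\in\mathbb{N}}{\prod}\Z/n\Z\diagup\mathcal{U}$, so I may assume every $n$ in the index set has this shape. Let $p_0<p_1<\cdots<p_{r-1}$ enumerate the (finitely many) primes below $b$; then every relevant $n$ can be written as $n=\prod_{k<r} p_k^{\,e_k(n)}$ with $e_k(n)\geq 0$, and by the Chinese Remainder Theorem $\Z/n\Z\iso \prod_{k<r}\Z/p_k^{e_k(n)}\Z$, an isomorphism uniform in $n$ (definable in the ring language once the $p_k$ are named).

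Next I would feed this uniform decomposition into Lemma \ref{CRT}: with $I_k:=\mathbb{N}$ for each $k<r$, with $R^k_{e}:=\Z/p_k^{e}\Z$, and with the map $n\mapsto (e_0(n),\ldots,e_{r-1}(n))$ pushing $\mathcal{U}$ forward to an ultrafilter $\mathcal{U}'$ on $\prod_{k<r}\mathbb{N}$, Lemma \ref{CRT} gives
\[
\underset{n}{\prod}\Z/n\Z\diagup\mathcal{U}\ \iso\ \prod_{k<r}\Bigl(\underset{e}{\prod}\Z/p_k^{e}\Z\diagup\pi_k(\mathcal{U}')\Bigr).
\]
Each factor on the right is an ultraproduct of the form treated in Proposition \ref{nip} (with prime $p_k$ and the ultrafilter $\pi_k(\mathcal{U}')$ on $\mathbb{N}$), hence is NIP. (If some $\pi_k(\mathcal{U}')$ is principal, concentrating on a fixed exponent $e$, that factor is just a finite ring, which is trivially NIP; this edge case causes no trouble.)

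Finally I would invoke the standard fact that NIP is preserved under finite direct products of structures — equivalently, the product structure is interpretable in the disjoint union of the factors, and NIP passes to interpretable structures — to conclude that $\underset{n\in\mathbb{N}}{\prod}\Z/n\Z\diagup\mathcal{U}$ is NIP. The only genuinely delicate point is bookkeeping with the ultrafilters: one must check that the projections $\pi_k$ of the pushed-forward ultrafilter are the ones Lemma \ref{CRT} requires, and that passing to $U$ and then decomposing commutes correctly. Everything else is routine, since the boundedness hypothesis is exactly what makes $r$ finite and the CRT decomposition uniform.
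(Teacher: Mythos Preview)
Your proposal is correct and follows essentially the same route as the paper: restrict to $U$, use the Chinese Remainder Theorem together with Lemma~\ref{CRT} to split the ultraproduct into a finite direct product of single-prime ultraproducts, apply Proposition~\ref{nip} to each factor, and conclude by preservation of NIP under finite products. The only cosmetic difference is that the paper first pigeonholes to a $\mathcal{U}$-large set on which every $n$ has exactly the same set of prime divisors, whereas you carry all primes below $b$ simultaneously and allow zero exponents; both variants work.
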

\begin{proof}
Put $R':=\underset{n\in\mathbb{N}}{\prod} \Z/ n\Z\diagup \mathcal{U} $. Let $U\in\mathcal{U}$ be as in the hypothesis,  and consider the induced ultrafilter on $U$ by $\mathcal{U}$, namely $\mathcal{V}:=\{ U \cap V: V \in\mathcal{U} \}$. Put $R:=\underset{n\in U}{\prod} \Z/n\Z \diagup \mathcal{V}$. Then $R\iso R'$.

Furthermore we can find $V\in\mathcal{V}$ such that every $n\in V$ has the same $d$ prime factors, $p_1,p_2,\ldots,  p_d$. Considering $\mathcal{W}$ the induced ultrafilter on $V$ by $\mathcal{V}$ and using Lemma \ref{CRT} we have that $R\iso (\underset{m}{\prod} (\Z/p_1^{m}\Z)\diagup \mathcal{W}_1)\times\cdots\times (\underset{m}{\prod}(\Z/p_d^m\Z)\diagup\mathcal{W}_d)$, for some ultrafilters $\mathcal{W}_1, \ldots,\mathcal{W}_d $ on $\mathbb{N}$. Since each of $\underset{m}{\prod} (\Z/p_k^m\Z)\diagup\mathcal{W}_k$ is NIP by Proposition \ref{nip} we can conclude that $R'$ is NIP.
\end{proof}
We take a moment here to note that we are using and will use the following result.
\begin{propn}\label{}
Each of $R_1,\ldots,R_n$ rings is $NIP$ (respectively, $simple$, $NTP2$) if and only if $R_1 \times\ldots\times R_n$ is $NIP$ (respectively ,simple, $NTP2$).
\end{propn}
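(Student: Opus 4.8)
The plan is to reduce the statement for arbitrary finite products to the binary case $n=2$ and then iterate, so the whole argument hinges on showing that $R_1 \times R_2$ is NIP (respectively simple, NTP2) if and only if both $R_1$ and $R_2$ are. For the ``only if'' direction I would use the fact that each factor $R_i$ is interpretable in the product $R_1\times R_2$: indeed $R_1$ is the quotient of $R_1\times R_2$ by the ideal $\{0\}\times R_2$, and this ideal is definable (without parameters) as the annihilator of the idempotent $(1,0)$, which is itself definable by a first-order formula asserting it is an idempotent whose complementary idempotent kills a chosen witness, or more cleanly: in a product of two rings with $1$ there are exactly two nontrivial idempotents $e_1=(1,0)$ and $e_2=(0,1)$, each $0$-definable once we note $e_1 R \cong R_1$, $e_2 R \cong R_2$ as rings. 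Since NIP, simplicity and NTP2 are all preserved under interpretation (indeed under passing to an interpretable structure), each $R_i$ inherits the property from $R_1\times R_2$.

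For the ``if'' direction I would argue that $R_1\times R_2$ is interpretable in the disjoint-union-style two-sorted structure $(R_1, R_2)$ — more precisely, in the structure with universe $R_1 \sqcup R_2$ and each sort carrying its own ring structure, the product ring $R_1\times R_2$ is interpreted on the definable set $R_1\times R_2$ (a definable subset of the square of the two-sorted universe) with coordinatewise operations. Then I would invoke the standard preservation result that a two-sorted structure whose sorts are each NIP (resp. simple, NTP2) is itself NIP (resp. simple, NTP2): this is because any formula in the disjoint union can be analysed sort by sort, and an array or indiscernible-tree witnessing the relevant tree/independence property would, by a pigeonhole/projection argument on the finitely many sorts, yield such a configuration in one of the sorts. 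For NIP this is elementary; for simple it follows from the behaviour of forking in disjoint unions; for NTP2 one can cite that NTP2 is preserved in the disjoint union of NTP2 structures (the array in $i)$--$ii)$ of Definition \ref{DefNTP2} projects to an array of the same height in one sort up to restricting to a subset of the index set).

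The main obstacle is making the ``disjoint union of NTP2 structures is NTP2'' step precise and properly cited, since unlike NIP (where the reduct/union behaviour is very soft) the NTP2 case genuinely uses the structure theory of that class; I would lean on the characterisation via Proposition \ref{Chernikov}-adjacent machinery or on the observation that an array of parameters witnessing TP2 in the union must, after replacing $\omega$ by an infinite subset using that there are only two sorts, have all its parameters $b_{i,j}$ living in a single sort, reducing to TP2 in that sort. A cleaner alternative, which avoids the two-sorted detour entirely, is: each of $R_1, R_2$ is interpretable in $R_1 \times R_2$ AND $R_1\times R_2$ is interpretable in $R_1$ together with an independent copy of $R_2$; but since for our application $R_1,\dots,R_n$ are the rings $\prod_m \Z/p_k^m\Z /\mathcal{W}_k$, which are all interpretable in a single ultraproduct $\prod_m \Z_p/p^m\Z_p$ of $p$-adics (varying $p$ gives finitely many fields, whose product is again interpretable in a single ultraproduct of valued fields), one can in fact sidestep the general product lemma and appeal directly to the interpretability arguments of Proposition \ref{nip}. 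For the stated proposition as written, though, I would present the two-sorted-union argument with the NTP2-preservation clause flagged as the one requiring an external reference.
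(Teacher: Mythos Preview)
Your approach is essentially the paper's: interpret each $R_i$ in the product, interpret the product in the disjoint union $R_1\sqcup\cdots\sqcup R_n$, and then show that NIP/simple/NTP2 pass to disjoint unions. The paper does exactly this, reducing to the binary case.

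The one place where the paper is sharper than your proposal is the NTP2 step for disjoint unions, which you correctly flag as the crux but handle imprecisely. Your suggested argument --- pigeonhole so that ``all parameters $b_{i,j}$ live in a single sort'' --- does not work as stated, because the $b_{i,j}$ are tuples whose coordinates may lie in both sorts simultaneously; no amount of thinning the index set forces a mixed tuple into one sort. What the paper uses instead is a variable-separation lemma (Lemma~\ref{Daniel}): any formula $\vphi(\bar x,\bar y)$ in the disjoint language with $\bar x$ from sort $A$ and $\bar y$ from sort $B$ is equivalent to a finite disjunction $\bigvee_l \theta_l(\bar x)\wedge\psi_l(\bar y)$. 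With the TP2 witness $\vphi(x,\bar y)$ having $x$ a single variable (hence in one sort), one then argues that some conjunct $\theta_k\wedge\psi_k$ already has TP2 on a subarray, and from there extracts TP2 for $\theta_k$ in $A$ or $\psi_k$ in $B$ alone. So the missing ingredient in your write-up is this formula-splitting lemma, not a pigeonhole on parameters.
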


This follows from the following two lemmas.
\begin{lem}\label{Daniel} Let $\La:=\La_1\sqcup\La_2$. Let $\vphi(\bar{x},\bar{y})\in\La$, where $\bar{x}\in\La_1$ and $\bar{y}\in\La_2$. Then $\vphi(\bar{x},\bar{y})$ is equivalent to a finite disjunction of formulas of the form $\theta(\bar{x}) \wedge \psi(\bar{y)} $, where $\theta(\bar{x})\in\La_1$ and $\psi(\bar{y})\in\La_2$.                   \end{lem}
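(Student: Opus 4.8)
The plan is to prove Lemma \ref{Daniel} by induction on the structure of $\vphi$. We regard $\La=\La_1\sqcup\La_2$ as a (two-sorted) language in which the variables $\bar x$ range over the sorts of $\La_1$ and $\bar y$ over the sorts of $\La_2$, and we work over a structure consisting of an $\La_1$-structure together with an $\La_2$-structure; this is the setting in which a product $R_1\times R_2$ is encoded by the pair $(R_1,R_2)$. Let $\mathcal{F}$ be the class of $\La$-formulas $\chi(\bar x,\bar y)$ that are equivalent to a finite disjunction $\bigvee_{i<m}\bigl(\theta_i(\bar x)\wedge\psi_i(\bar y)\bigr)$ with each $\theta_i$ in $\La_1$ and each $\psi_i$ in $\La_2$. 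It suffices to show that $\mathcal{F}$ contains all atomic formulas and is closed under $\vee$, $\wedge$, $\neg$ and existential quantification over a variable of either sort.

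The base case is the one place where the disjointness hypothesis is used essentially. Since $\La_1$ and $\La_2$ share no sort, relation symbol or function symbol, every $\La$-term is built over exactly one of the two sort-families; hence every atomic $\La$-formula is either an $\La_1$-formula involving only variables among $\bar x$, or an $\La_2$-formula involving only variables among $\bar y$. Fixing tautologies $\top_1\in\La_1$ and $\top_2\in\La_2$ (e.g. $\forall z\,(z=z)$ in a sort of each), such an atomic formula is either $\theta(\bar x)\wedge\top_2$ or $\top_1\wedge\psi(\bar y)$, so it lies in $\mathcal{F}$.

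For the inductive step I would verify closure of $\mathcal{F}$ under the operations in the order $\vee$, $\wedge$, $\neg$, $\exists$ (treating $\forall$ as $\neg\exists\neg$). Closure under $\vee$ is immediate. For $\wedge$, distribute: $\bigl(\bigvee_i(\theta_i\wedge\psi_i)\bigr)\wedge\bigl(\bigvee_j(\theta_j'\wedge\psi_j')\bigr)\equiv\bigvee_{i,j}\bigl((\theta_i\wedge\theta_j')\wedge(\psi_i\wedge\psi_j')\bigr)$, using that conjunctions of $\La_1$-formulas stay in $\La_1$ and likewise for $\La_2$. For $\neg$, De Morgan gives $\neg\bigvee_i(\theta_i\wedge\psi_i)\equiv\bigwedge_i(\neg\theta_i\vee\neg\psi_i)$, and each conjunct satisfies $\neg\theta_i\vee\neg\psi_i\equiv(\neg\theta_i\wedge\top_2)\vee(\top_1\wedge\neg\psi_i)\in\mathcal{F}$, so the whole conjunction is in $\mathcal{F}$ by the closure under $\wedge$ just established. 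Finally, if $x$ is a variable among $\bar x$ (the case of a variable among $\bar y$ being symmetric), then since no $\psi_i$ mentions $x$ we have $\exists x\,\bigvee_i(\theta_i(\bar x)\wedge\psi_i(\bar y))\equiv\bigvee_i\bigl((\exists x\,\theta_i)\wedge\psi_i\bigr)$, and $\exists x\,\theta_i\in\La_1$. Applying this induction to $\vphi$ yields the lemma.

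I do not expect any genuine obstacle: apart from the base case, everything is routine propositional and quantifier bookkeeping. This is essentially the Feferman--Vaught-style decomposition for a disjoint union of structures in disjoint languages, phrased in the two-sorted form convenient for handling finite products of rings.
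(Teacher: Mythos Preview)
Your proposal is correct and follows exactly the approach indicated in the paper, which simply says ``By induction on the length of the formula $\vphi$'' and cites Exercise~9.6.15 of \cite{HOD}. You have merely spelled out the details of that induction carefully, and each step (base case via disjointness of the languages, closure under $\vee$, $\wedge$, $\neg$, $\exists$) is handled correctly.
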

\begin{proof}
By induction on the length of the formula $\vphi$. See for example Exercise 9.6.15 of \cite{HOD}.
\end{proof}

\begin{lem}
Let $\La_1,\ldots, \La_n$ be disjoint languages. If $A_1,\ldots,A_n$ are $NIP$, $simple$ or $NTP2$ $\La_i$-structures then so is $A_1\sqcup\ldots\sqcup A_n$.
\end{lem}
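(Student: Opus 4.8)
The plan is to reduce to the case $n=2$ and then apply Lemma~\ref{Daniel}. Since the disjoint union of structures (and of languages) is associative and $A_1\sqcup\cdots\sqcup A_{n-1}$ is an $(\La_1\sqcup\cdots\sqcup\La_{n-1})$-structure, a straightforward induction on $n$ leaves us to prove: if $A_1$ is an NIP (resp.\ simple, NTP2) $\La_1$-structure and $A_2$ an NIP (resp.\ simple, NTP2) $\La_2$-structure, then $A_1\sqcup A_2$ is NIP (resp.\ simple, NTP2). I will treat $A_1\sqcup A_2$ as a two-sorted structure with sorts $P_1,P_2$, no symbols linking the two sorts, and $\La_i$-structure on $P_i$ equal to $A_i$, so that variables carry sorts. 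Relativising $\La_i$-sentences to $P_i$ shows that for every $\M\models\Th(A_1\sqcup A_2)$ the $\La_i$-structure carried by $P_i(\M)$ is a model of $\Th(A_i)$, and that $\M$ and $P_i(\M)$ agree on $\La_i$-formulas evaluated at tuples from $P_i(\M)$. Hence any $\La_i$-formula all of whose free variables are of sort $i$ is NIP (resp.\ simple, NTP2) for $\Th(A_1\sqcup A_2)$ as soon as it is so for $\Th(A_i)$: a configuration of parameters (and a realising tuple) witnessing the contrary must match the sorts of the variables, hence lies inside $P_i(\M)$, where it already witnesses the contrary for $\Th(A_i)$.

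Now take an arbitrary formula $\vphi$ of $\Th(A_1\sqcup A_2)$ and group its variables by sort, writing $\vphi=\vphi(\bar z_1;\bar z_2)$ where $\bar z_i$ lists the sort-$i$ variables of $\vphi$ (both its object and its parameter variables). Applying Lemma~\ref{Daniel} with $\bar z_1$ in place of $\bar x$ and $\bar z_2$ in place of $\bar y$ gives
\[
\vphi\ \equiv\ \bigvee_{k<m}\bigl(\theta_k(\bar z_1)\wedge\psi_k(\bar z_2)\bigr),\qquad \theta_k\in\La_1,\ \psi_k\in\La_2 .
\]
Reading $\bar z_1=\bar x_1\bar y_1$ and $\bar z_2=\bar x_2\bar y_2$ according to the original object/parameter split, the first paragraph makes each $\theta_k(\bar x_1;\bar y_1)$ and each $\psi_k(\bar x_2;\bar y_2)$ NIP (resp.\ simple, NTP2) for $\Th(A_1\sqcup A_2)$. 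It then remains to invoke the standard closure properties of these dividing lines: for a fixed theory and a fixed object/parameter partition of the variables, the NIP (resp.\ simple, NTP2) formulas are stable under adding dummy variables and under finite conjunctions and disjunctions; in particular the ``rectangular'' conjunctions $\theta_k(\bar x_1;\bar y_1)\wedge\psi_k(\bar x_2;\bar y_2)$, and therefore their disjunction $\vphi$, inherit the property, which is exactly what we need.

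The one step carrying real weight is this last one, so let me indicate it. For NIP it is routine. For simple and for NTP2 it is equally standard (for NTP2 it can be extracted from the methods of \cite{CHE}), and is proved by the same Ramsey/pigeonhole argument: for instance, if a rectangular conjunction $\theta(\bar x_1;\bar y_1)\wedge\psi(\bar x_2;\bar y_2)$ of two NTP2 formulas had TP2 with $l$-inconsistent rows, then along every path both the set of $\theta$-instances and the set of $\psi$-instances picked out by the path would be consistent, while inside each row a Ramsey argument on the $l$-element sets of columns yields an infinite subcollection of columns of that row along which all the $\theta$-instances are $l$-inconsistent or all the $\psi$-instances are; a final pigeonhole over the rows then produces an array witnessing TP2 for $\theta$ or for $\psi$, a contradiction. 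The only genuinely delicate point in the whole proof is thus to be sure that all three properties — not merely NIP, where it is easy — survive the passage through Lemma~\ref{Daniel}, whose decomposition crosscuts the object/parameter split of the variables; everything else is bookkeeping with sort predicates and relativisation.
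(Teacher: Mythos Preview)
Your argument is correct, but the route differs from the paper's. The paper (sketching only the NTP2 case) first reduces to a formula $\vphi(x,\bar y)$ with a \emph{single} object variable $x$; for NTP2 this is the reduction in \cite{CHE}, and an analogous one-variable reduction is available for simplicity. Then $x$ lives in one sort, say $P_1$, so in the Lemma~\ref{Daniel} decomposition $\vphi\equiv\bigvee_l(\theta_l\wedge\psi_l)$ each $\psi_l\in\La_2$ contains no occurrence of $x$ at all. Thinning the array so that the truth values $\psi_l(\bar b_{i,j})$ are constant collapses $\vphi$ to a pure $\La_1$-formula witnessing TP2 in $A_1$. This completely sidesteps any question of how NTP2 or simple formulas behave under Boolean combinations across a nontrivial split of the object variables.

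You instead keep a general tuple $\bar x$ and rely on exactly that closure. Your Ramsey/pigeonhole sketch for the rectangular conjunction is fine, but you only assert the disjunction step (and the entire simple case) as ``standard''. It is true, but it is precisely where the content lies: for NTP2 one passes to a mutually indiscernible array and pigeonholes on which disjunct the witness for the constant path satisfies in each row; for simplicity the cleanest argument runs through a dividing chain for the disjunction, pigeonholing on which disjunct the common realisation picks at each step. None of this is deep, but it deserves at least the level of detail you gave the conjunction. Alternatively, invoking the one-variable reduction as the paper does would short-circuit all of it.
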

To see this, it suffices to show that each of $NIP$, $simple$ and $NTP2$ are preserved under binary disjoint unions. We present as an example the NTP2 case.

\begin{rem}
Let $A$ be $\La$-structure and $B$ be an $\La'$-structure, where $\La$ and $\La'$ are disjoint. Then the $\La \sqcup \La'$-structure $A\sqcup B$ is NTP2 if and only if both $A$ and $B$ are NTP2.
\end{rem}
\begin{proof}
Note that if either $A$ or $B$ have TP2 then $A\sqcup B$ the disjoint union has TP2 witnessed by the same formula and array.
Now assume that $A\sqcup B $ has TP2. Then TP2 is witness by a formula $\vphi(x,\bar{y})$ with $x$ a single variable and an array $(\bar{b}_{i,j})$. But by Lemma \ref{Daniel} $\vphi$ is a finite disjunction of formulas of the form $\theta_l \wedge \psi_l$. So for a particular index $k$, there is a subarray from $(\bar{b}_{i,j})$ that witness the tree property of the second kind for $\theta_k \wedge \psi_k$. Furthermore we can find an array of parameters only in $A$ such that $\theta_k(x,\bar{y})$ holds, or only in $B$ such that $\psi_k(x,\bar{y})$ holds. This means that either $A$ or $B$ have the tree property of the second kind.
\end{proof}

Now let both $p\in\Prime$, and $n\in\mathbb{N}$ vary in $\underset{(p,n)\in\Prime\times\omega}{\prod} \Z/p^n\Z \diagup \mathcal{U}$ with $\mathcal{U}$ a non-principal ultrafilter on $\Prime\times\omega$.
Consider the following class of residue rings $\mathcal{C}:=\{\mathbb{Z}/p^n\mathbb{Z}:p\in\mathbb{P},n\in \omega\}$.

\begin{propn}\label{ntp2}
Any ultraproduct of rings in $\mathcal{C}=\{\mathbb{Z}/p^n\mathbb{Z}:p\in\mathbb{P},n\in \omega\}$ has NTP2 theory.
\end{propn}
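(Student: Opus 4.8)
The plan is to reduce to the two cases already settled in the paper: ultraproducts of the rings $\Z/p^n\Z$ that are concentrated on a single prime, covered by Proposition \ref{nip}, and the genuinely ``unbounded primes'' situation, which I would handle by interpreting the ring in a Henselian valued field and applying Proposition \ref{Chernikov}. Write $R:=\underset{(p,n)\in\Prime\times\omega}{\prod}\Z/p^n\Z\diagup\mathcal{U}$, let $\pi\colon\Prime\times\omega\to\Prime$ be the projection onto the first coordinate, and put $\mathcal{U}_1:=\pi(\mathcal{U})$, an ultrafilter on $\Prime$. The argument splits according to whether $\mathcal{U}_1$ is principal; since finite structures and NIP theories are NTP2, it is enough to show that $R$ is finite or NIP in the first case and NTP2 in the second.

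If $\mathcal{U}_1$ is principal, concentrated at a prime $q$, then $\{q\}\times\omega\in\mathcal{U}$, so, letting $\mathcal{V}$ denote the ultrafilter on $\omega$ obtained via the bijection $(q,n)\mapsto n$, one has $R\iso\underset{n\in\omega}{\prod}\Z/q^n\Z\diagup\mathcal{V}$. This is a single finite ring if $\mathcal{V}$ is principal, and is NIP by Proposition \ref{nip} otherwise; either way $R$ is NTP2.

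If $\mathcal{U}_1$ is non-principal, I would perform the interpretation from the proof of Proposition \ref{nip} uniformly in $(p,n)$. For each pair $(p,n)$ the ring $\Z/p^n\Z\iso\Z_p/p^n\Z_p$ is interpreted in $\Qp$ by a fixed tuple of Denef--Pas formulas using the single parameter $p^n$ (which has valuation $n$): the domain is the valuation ring and the interpreting equivalence relation is $v(x-y)\ge v(p^n)$. Feeding in the parameter $[p^n]_{\mathcal{U}}$, and using that the interpretation is uniform in $(p,n)$ and so commutes with the ultraproduct, these formulas interpret $R$ inside $K:=\underset{(p,n)\in\Prime\times\omega}{\prod}\Qp\diagup\mathcal{U}$, taken with the valued-field and angular-component structure inherited from each $\Qp$. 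It then remains to check that $K$ satisfies the hypotheses of Proposition \ref{Chernikov}. As an ultraproduct of Henselian valued fields $K$ is Henselian; each $\Qp$ has characteristic $0$, so $K$ does; and $K$ has residue characteristic $0$, because $\mathcal{U}_1$ is non-principal, hence $\{q\}\times\omega\notin\mathcal{U}$ for every standard prime $q$, so the complement of $\{q\}\times\omega$ lies in $\mathcal{U}$ and forces the residue characteristic away from $q$. The residue field of $K$ is $k:=\underset{(p,n)\in\Prime\times\omega}{\prod}\mathbb{F}_p\diagup\mathcal{U}$, an ultraproduct of finite fields which is infinite (again because no $\{q\}\times\omega$ belongs to $\mathcal{U}$) and hence pseudofinite, so supersimple of rank $1$ (cf.\ \cite{CDM}) and in particular NTP2. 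Proposition \ref{Chernikov} then gives that $K$ is NTP2, and since NTP2, like NIP, is preserved under interpretation, $R$ is NTP2.

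I expect the bulk of the work to be the verifications concerning $K$ rather than any single hard step: one must confirm that the interpretation of the residue rings in the $p$-adic fields is genuinely uniform, so that the interpreted structure inside $K$ is exactly $R$, and one must check that $K$ has residue characteristic $0$ and pseudofinite residue field. Both of these depend on $\mathcal{U}_1$ being non-principal, which is precisely why the case split is forced: if $\mathcal{U}_1$ concentrates on $q$, then $K$ is merely an ultrapower of $\mathbb{Q}_q$, of residue characteristic $q$, so Proposition \ref{Chernikov} no longer applies and one must instead appeal to Proposition \ref{nip}.
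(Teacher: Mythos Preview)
Your proof is correct and follows the same route as the paper: interpret $\Z/p^n\Z$ uniformly in $(\Qp,a_{p^n})$ via a parameter of value $n$, pass to the ultraproduct, and use that the resulting ultraproduct of $p$-adic fields is NTP2. The paper does not make your case split on $\pi(\mathcal{U})$ explicit---it simply invokes Proposition~\ref{Chernikov} for the ambient valued field in one stroke---so your version is slightly more careful about the characteristic-$(0,0)$ hypothesis, but the argument is otherwise identical.
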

\begin{proof}
We first note that $\mathbb{Z}/p^n\mathbb{Z} \cong \pint/ p^n\pint$, where $\mathbb{Z}_p$ denotes to the ring of $p$-adic integers.

Let $\U$ be a non-principal ultrafilter on $\Prime\times \omega$. For each $(p,e)\in\Prime\times \omega$ choose an element $a_{p^e} \in \Qp$ such that $v(a_{p^e})=e$. This defines $p^e\Z_p$ in $\mathbb{Q}_p$, as the set of elements in $\Z_p$ with value greater or equal to the value of $a_{p^e}$. Hence $\Z_p/p^e\Z_p$ is interpretable in $(\Qp,a_{p^e})$, and $R:=\underset{(p,e)\in\Prime\times\omega}{\prod}(\Z_p/p^e\Z_p)\diagup \U$ is interpretable in $\underset{(p,e)\in\Prime\times\omega}{\prod} (\Qp, \overline{a_{p^e}})\diagup\U$, where $\overline{a_{p^e}}$ is an element in $\underset{(p,e)\in\Prime\times\omega}{\prod}\Qp$ with $(p,e)$-projections equal to $a_{p^e}$. Since by Proposition \ref{Chernikov} we have that $\underset{p\in\Prime}{\prod}\Qp/\mathcal{W}$ is NTP2 then $\underset{(p,e)\in\Prime\times\omega}{\prod}(\Qp, a_{p^e}) $ is also NTP2.

By the above observations, $R$ is NTP2. So every ultraproduct in $\mathcal{C}$ has NTP2 theory.
\end{proof}

Furthermore, in Proposition \ref{ntp2} $R$ need not be simple or NIP. If the ultrafilter concentrates on a prime $p$ then $R$ is NIP but not simple, and if it concentrates on prime powers with exponent 1 then $R$ is supersimple since the ultraproduct is then a pseudofinite field. Since pseudofinite fields have the independence property, $R$ is not $NIP$.

\begin{cor}\label{NTP2case}
Let $\U$ be an ultrafilter on $\mathbb{N}$ such that there exist $b\in\mathbb{N}$ and $U\in\U$ such that every $n\in U$ has at most $b$ prime factors. Then $\underset{n}{\prod} \Z/n\Z \diagup \mathcal{U}$ is NTP2.
\end{cor}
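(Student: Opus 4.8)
The plan is to reduce Corollary \ref{NTP2case} to Proposition \ref{ntp2} by the same mechanism used in the proof of Corollary \ref{NIPcase}, replacing the appeal to Proposition \ref{nip} with an appeal to Proposition \ref{ntp2}. First I would pass to the relevant element of the ultrafilter: let $U\in\U$ and $b\in\mathbb{N}$ be as in the hypothesis, and let $\mathcal{V}:=\{U\cap V: V\in\U\}$ be the induced ultrafilter on $U$, so that $\underset{n\in\mathbb{N}}{\prod}\Z/n\Z\diagup\U\iso\underset{n\in U}{\prod}\Z/n\Z\diagup\mathcal{V}$. The point of passing to $U$ is that on $U$ the number of prime factors is uniformly bounded by $b$.

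Next I would write each $n\in U$ in the form $n=q_1^{e_1}\cdots q_{d(n)}^{e_{d(n)}}$ with $d(n)\le b$, and use the Chinese Remainder Theorem to get $\Z/n\Z\iso\prod_{k<d(n)}\Z/q_k^{e_k}\Z$. Because $d(n)$ need not be constant on $U$, I would first refine once more: since $d(n)\in\{1,\dots,b\}$ for all $n\in U$, by the ultrafilter property there is $V\in\mathcal{V}$ and a fixed $d\le b$ with $d(n)=d$ for all $n\in V$. Pass to the induced ultrafilter $\mathcal{W}$ on $V$. Now on $V$ every $n$ decomposes as a product of exactly $d$ prime powers, i.e. $\Z/n\Z\iso R^0_n\times\cdots\times R^{d-1}_n$ where $R^k_n=\Z/q_k(n)^{e_k(n)}\Z$ and each factor is a ring in the class $\mathcal{C}=\{\Z/p^m\Z: p\in\Prime, m\in\omega\}$ (here I use that $q_k(n)$ and $e_k(n)$ are determined by $n$, say by ordering the prime factors increasingly). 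Applying Lemma \ref{CRT} with the index sets $I_k=V$, we get
\[
\underset{n\in V}{\prod}\Z/n\Z\diagup\mathcal{W}\iso\Big(\underset{n\in V}{\prod}R^0_n\diagup\pi_0(\mathcal{W})\Big)\times\cdots\times\Big(\underset{n\in V}{\prod}R^{d-1}_n\diagup\pi_{d-1}(\mathcal{W})\Big).
\]
Each factor on the right is an ultraproduct of rings in $\mathcal{C}$, hence is NTP2 by Proposition \ref{ntp2}, and a finite product of NTP2 rings is NTP2 by the Remark on disjoint unions (or the Proposition preceding Lemma \ref{Daniel}). Therefore $\underset{n\in V}{\prod}\Z/n\Z\diagup\mathcal{W}$ is NTP2, and since this ring is isomorphic to $\underset{n\in\mathbb{N}}{\prod}\Z/n\Z\diagup\U$, we are done.

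The only mildly delicate point — the analogue of the main obstacle — is setting up the application of Lemma \ref{CRT} correctly: one must fix a canonical way of listing the $d$ prime-power factors of $n$ (for instance, order the primes $q_1<\cdots<q_d$ and take $R^k_n=\Z/q_{k+1}^{e_{k+1}}\Z$) so that the map $n\mapsto(R^0_n,\dots,R^{d-1}_n)$ is genuinely a function on $V$ and the CRT isomorphism is definable uniformly in $n$; then the displayed decomposition is literally an instance of Lemma \ref{CRT} with all index sets equal to $V$. Everything else is a routine unwinding of ``pass to an element of the ultrafilter'' plus the fact that NTP2 is preserved under isomorphism and finite products.
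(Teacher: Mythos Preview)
Your proposal is correct and follows essentially the same route as the paper's proof: restrict to $U$, refine to a $V\in\mathcal{V}$ on which the number $d$ of prime divisors is constant, apply the Chinese Remainder Theorem together with Lemma \ref{CRT} to write the ultraproduct as a finite product of ultraproducts of rings in $\mathcal{C}$, and conclude via Proposition \ref{ntp2} and closure of NTP2 under finite products. Your extra remark about fixing a canonical ordering of the prime factors to make the decomposition a genuine function of $n$ is a point the paper leaves implicit.
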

\begin{proof}
Let $R'$ be such an ultraproduct and $U$ an element of the ultrafilter as in the hypothesis. Consider $\mathcal{V}:=\{V\cap U: V\in\U \}$. Put $R:=\underset{n\in U}{\prod} \Z/n\Z\diagup \mathcal{V}$. Furthermore there is a $V\in \mathcal{V}$ such that every $n\in V$ has exactly $d$ prime divisors. Choose as earlier an ultrafilter $\mathcal{W}$ on $V$ such that $R\iso \underset{n\in V}{\prod}( \Z/p_{n(1)}^{e_{n(1)}}\Z \times \cdots \times \Z/p_{n(d)}^{e_{n(d)}} \Z )\diagup \mathcal{W} $. Using Lemma \ref{CRT} we have that 
$R \iso (\underset{(p_{n(1)},e_{n(1)})}{\prod} \Z/p_{n(1)}^{e_{n(1)}}\Z \diagup \mathcal{W}_1) \times\cdots\times (\underset{(p_{n(d)},e_{n(d)})}{\prod} \Z/p_{n(d)}^{e_{n(d)}}\Z \diagup \mathcal{W}_d )$. Hence by Proposition \ref{ntp2} we have that $R$ (and therefore $R'$) has $NTP2$ theory.
\end{proof}

\section{Simple case}

Now fix $b\in\mathbb{N}$ and consider the following ultraproduct, $\underset{p\in\Prime}{\prod} \Z/p^b\Z \diagup \mathcal{U}$.

In [Che-Hru] we find the following definition.
\begin{defn}
Let $D \subseteq N$ be structures possibly in different languages with $D$ definable in $N$ , and let $a\in N^{eq}$ be a canonical parameter for $D$.
\begin{enumerate}	
\item $D$ is \textit{canonically embedded} in $N$ if the $0$-definable relations of $D$ are the relations on $D$ which are $a$-definable in the sense of $N$.
\item $D$ is \textit{stably embedded} in $N$ if every $N$-definable relation on $D$ is $D,a$-definable, uniformly, in the structure $N$.
\item $D$ is \textit{fully embedded} in $N$ if it is both canonically and stably embedded in $N$.
\end{enumerate}

\end{defn}

In \cite{CHE} it is mentioned in the proof of Theorem 7.6 that if $\bar{K}=(K,\Gamma, k, v,ac)$ is a henselian valued field of characteristic $(0,0)$ in the three sorted Denef-Pas language then $\Gamma$ and $k$ are stably embedded with no new induced structure so are fully embedded. For completeness we include a proof.
We first recall the definition of the Denef-Pas language for Henselian valued fields.

\begin{defn}\label{Denefpas}
The \textit{Denef-Pas language} is a three sorted language, with a sort for the valued field in the language of rings $\La_{rng}=\{\cdot,+,-,0,1\}$, a sort for the ordered abelian group, in the language of ordered abelian groups $\La_{ogps}=\{+,0,<\} $together with an extra symbol $\infty$, and a sort for the residue field in the language of rings, $\La_{rng}$. We also include symbols for a valuation map $v:K\rightarrow \Gamma$ and an angular component map $\bar{ac}: K\rightarrow k$.
\end{defn}

\begin{propn}\label{Chat}
Let $\bar{K}=(K,\Gamma, k, v,ac)$ be a Henselian valued field of characteristic $(0,0)$ in the Denef-Pas language. Then the value group $\Gamma$ and the residue field $k$ are fully embedded.
\end{propn}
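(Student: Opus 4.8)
The plan is to establish full embeddedness of $\Gamma$ and $k$ separately, and in each case to split the statement into its two constituent parts: canonical embeddedness (no new $0$-definable structure beyond the reduct) and stable embeddedness (every externally definable relation is already internally definable with parameters from the sort). The main tool throughout will be the cell decomposition / quantifier elimination theorem for Henselian valued fields of residue characteristic $0$ in the Denef--Pas language, which reduces every formula of $\bar K$ to a Boolean combination of valued-field conditions and formulas whose variables of $\Gamma$-sort and $k$-sort occur only inside $\Gamma$-sort and $k$-sort subformulas respectively. I would quote this as the known Pas quantifier elimination result.

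First I would treat the residue field $k$. For \emph{canonical embeddedness}: take a formula $\vphi(\bar x)$ with $\bar x$ ranging over $k$ and parameters in $\bar K$; applying Denef--Pas quantifier elimination and then specialising, one sees that the induced relation on $k$ is a Boolean combination of sets defined by $\La_{rng}$-formulas with parameters from $k$, together with the images under $\bar{ac}$ and $v$ of the valued-field parameters --- but those images are themselves elements of $k$ and $\Gamma$, and the $\Gamma$-part cannot interact with a tuple living purely in $k$. Hence every $\bar K$-definable relation on $k$ with a canonical parameter is already $\La_{rng}$-definable over $k$, which is what canonical embeddedness requires. For \emph{stable embeddedness} one runs essentially the same computation but keeps track of the parameters: any $\bar K$-definable family of subsets of $k^m$ is, after QE, a family defined uniformly by $\La_{rng}$-formulas with parameters in $k$ (the $v$- and $\bar{ac}$-images of the original parameters), so the family is parametrised within $k$ itself, uniformly. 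The treatment of $\Gamma$ is completely parallel: a formula with a $\Gamma$-tuple free reduces, via QE, to a Boolean combination of $\La_{ogps}$-conditions on that tuple with parameters that are $v$-images of valued-field parameters (hence elements of $\Gamma$), the residue-field subformulas contributing nothing since they share no variables with the $\Gamma$-tuple. This gives both canonical and stable embeddedness for $\Gamma$, and "fully embedded" is by definition the conjunction of the two.

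The step I expect to be the main obstacle is making the reduction genuinely \emph{clean}, i.e. verifying that after quantifier elimination no hidden interaction survives between the two auxiliary sorts when one of them carries the free variables. Concretely, one must be careful that the cross-sort function symbols $v$ and $\bar{ac}$ applied to \emph{quantified} valued-field variables do not smuggle in extra structure; this is handled by the precise form of the Denef--Pas QE, which guarantees that after elimination the valued-field variables that remain are free, so their $v$- and $\bar{ac}$-images are legitimate parameters of the respective sorts rather than new quantified objects. A secondary point requiring a line of care is uniformity in the stable-embeddedness clause: one should note that QE produces the defining $\La_{rng}$- or $\La_{ogps}$-formula \emph{uniformly} in the external parameter, so the witnessing family of internal parameters varies definably, as the definition of stable embeddedness demands. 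Once these points are isolated the argument is a routine bookkeeping over the QE normal form, and I would present it at that level of detail rather than re-deriving the quantifier elimination itself.
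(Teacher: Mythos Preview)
Your proposal is correct and follows essentially the same route as the paper: both invoke Denef--Pas elimination of field-sort quantifiers to put any formula into a Boolean combination of conjunctions of the shape $\psi_1(\bar x)\wedge\varphi_2(v(t_2(\bar x)),\bar\alpha)\wedge\varphi_3(\bar{ac}(t_3(\bar x)),\bar\beta)$, and then observe that a relation living entirely in one auxiliary sort uses only the corresponding conjunct, with parameters pushed into that sort via $v$ or $\bar{ac}$. One small slip to fix in your write-up: in the canonical-embeddedness paragraph you begin with arbitrary $\bar K$-parameters and conclude ``$\La_{rng}$-definable over $k$'', but canonical embeddedness (since the sort $k$ has canonical parameter $\emptyset$) asks that every $\emptyset$-definable-in-$\bar K$ relation on $k$ be $\emptyset$-definable in the pure $\La_{rng}$-structure on $k$; your argument yields exactly this once you specialise to the parameter-free case, so just state it that way.
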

\begin{proof}

In the Denef-Pas language we have elimination of field quantifiers, cf. \cite{PAS}, or \cite{PASJSL}.

Let us show first that $\Gamma$ is stably embedded. Consider a $\bar{K}$-definable relation $R$ on $\Gamma$, defined by $\varphi(\bar{x},\bar{\alpha},\bar{\beta})$. By Denef-Pas quantifier elimination we may assume $\vphi$ has the form  $\bar{Q}(\bar{a},\bar{b}) \psi(\bar{x},\bar{\alpha},\bar{a},\bar{\beta}, \bar{b})$ where $\psi$ is a quantifier free formula, $\bar{Q}$ is a tuple of quantifiers on the group and residue field sorts, $\bar{x}$ is a tuple of free variables of the valued field sort, $\bar{\alpha}$ and $\bar{a}$ are tuples of free variables and bound variables respectively of the ordered group sort, and $\bar{\beta}$ and $\bar{b}$ are tuples of free and bound variables respectively from the residue field sort. 

Furthermore we may assume $\psi$ is a disjunction of formulas of the form $\psi_1(\bar{x}) \wedge \psi_2(v(t_2(\bar{x}),\bar{\alpha},\bar{a}  ) \wedge \psi_3(\bar{ac}(t_3(\bar{x})),\bar{\beta} , \bar{b})$, where $\psi_1(\bar{x})$ is a formula without quantifiers on the valued field sort, $\psi_2 (v(t_2(\bar{x})),\bar{\alpha},\bar{a})$ is a formula without quantifiers on the value group sort, and $\psi_3 (\bar{ac}(t_3(\bar{x})),\bar{\beta},\bar{b}  )$ is a quantifier free formula from the residue field sort, also $t_2(\bar{x})$ and $t_3(\bar{x})$ are terms obtained from the variables $\bar{x}$ via the operations from the valued field sort. We may assume the variables from $\bar{a}$ only appear in formulas like $\psi_2$, and the variables from $\bar{b}$ only appear in formulas like $\psi_3$. Hence $\vphi$ is equivalent to a disjunction of formulas of the form $\psi_1(\bar{x}) \wedge \vphi_2(v(t_2(\bar{x})),\bar{\alpha})\wedge\vphi_3(\bar{ac}(t_3(\bar{x})), \bar{\beta})$. Here $\psi_1$ is a quantifier free formula on the sort of valued fields, $\vphi_2(v(t_2(\bar{x}),\bar{\alpha})$ is a (quantified) formula from the sort of ordered groups where the bound variables are among $\bar{a}$, and $\vphi_3(\bar{ac}(t_3(\bar{x})), \bar{\beta})$ is a (quantified) formula from the residue field sort where the bound variables are from $\bar{b}$. 

Since the formula $\vphi$ defines a relation on $\Gamma$ we end up with a formula made up with a disjunction of formulas of the form of $\vphi_2$ and the parameters involved are all from $\Gamma$, possibly of the form $v(\bar{p})$ for some $\bar{p}\in K$.

Next we show that $\Gamma$ is canonically embedded. Consider now $S$ an $\emptyset$-definable relation in $\Gamma$, defined by $\vphi(\bar{x},\bar{\alpha},\bar{\beta})$. By the above argument we end up with $S$ being definable by a disjunction of formulas of the form $\vphi_2(v(t_2(\bar{x})),\bar{\alpha})$ with no parameters. Hence $S$ is $\emptyset$-definable and so $\Gamma$ is canonically embedded in $\bar{K}$.

In an analogous way we have that when a formula defines a subset of $k$ the only part of the formulas in the disjunction of formulas of the form $\psi_1(\bar{x}) \wedge \vphi_2(v(t_2(\bar{x}),\bar{\alpha})\wedge\vphi_3 (\bar{ac}(t_3(\bar{x})),\bar{\beta})$ we are interested in is that corresponding to $\vphi_3(\bar{ac}(t_3(\bar{x})),\bar{\beta})$ and every parameter used can be taken to be from $k$, where some may be of the form $\bar{ac}(\bar{p})$ for $\bar{p} \in K$. Hence $k$ is stably embedded. Furthermore an $\emptyset$-definable relation in $k$ defined by a formula $\vphi(\bar{x},\bar{\alpha},\bar{\beta} )$ ends up being $\emptyset$-definable by a formula only in the residue field sort. Hence $k$ is canonically embedded in $\bar{K}$.

\end{proof}

In the next definition and proposition, taken from \cite{HKP}, we work in a saturated model $\overline{M}=\overline{M}^{eq}$ of $T=T^{eq}$.

\begin{defn}
$\ $ 
\begin{itemize}
\item Suppose that $\mathcal{P}$ is a class of (partial) types closed under automorphisms. We say that $T$ is \textit{coordinatised} by $\mathcal{P}$ if for every $a\in\overline{\M}$ there is $n\in\omega$ and $a_i$ for $i\leq n$ such that $a_n=a$ and $\tp (a_i/a_{i-1})\in\mathcal{P}$ for all $i\leq n$, with $a_{-1}=\emptyset$. The sequence $(a_i:i\leq n)$ is called a \textit{coordinatising sequence}.
\item A type $q$ is said to be \textit{simple} if for each extension $p'\in S(B)$ there is a subset $A$ of $B$ with  $|A|\leq|T|$ such that $p'$ does not divide over $A$.
\item A type $q$ is said to be \textit{supersimple} if for each extension $p'\in S(B)$ there is a subset $A$ of $B$ with  $|A|<\aleph_0$ such that $p'$ does not divide over $A$.
\end{itemize}
\end{defn}	

\begin{propn}[\cite{HKP}, Proposition 4.2]\label{coordinatisation}
If $T$ is coordinatised by simple types then $T$ is simple. Furthermore if $T$ is coordinatised by supersimple types then $T$ is supersimple.
\end{propn}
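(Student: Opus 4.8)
The plan is to reduce simplicity of $T$ to the statement that every complete finitary type over $\emptyset$ is simple, and then to prove the latter by induction along a coordinatising sequence. For the reduction I would first record three routine facts about simple types: (i) any extension of a simple type is simple (immediate from the definition, since an extension of an extension of $q$ is an extension of $q$); (ii) a sub-tuple of a simple type, and the restriction of a simple type to a smaller parameter set, is again simple (sub-tuples and base-restrictions inherit non-dividing, using that non-dividing is monotone in the base); and (iii) $T$ is simple if and only if the type over $\emptyset$ of every single element of $\overline{\M}=\overline{\M}^{\mathrm{eq}}$ is simple --- this is the local-character characterisation of simplicity, once one notes that in $T^{\mathrm{eq}}$ finite tuples are coded by single imaginaries and that the complete types extending $\tp(a/\emptyset)$ are exactly the $\tp(a'/B)$ with $\tp(a'/\emptyset)=\tp(a/\emptyset)$.

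Next, fix $a\in\overline{\M}$ together with a coordinatising sequence $(a_i:i\le n)$, $a_n=a$, and put $c_i=(a_0,\ldots,a_i)$. Since $\tp(a_i/c_{i-1})$ extends the simple type $\tp(a_i/a_{i-1})$, it is simple by (i), and hence so is $\tp(c_i/c_{i-1})$, this being interdefinable with $\tp(a_i/c_{i-1})$ over $c_{i-1}$. I would then show by induction on $n$ that $\tp(c_n/\emptyset)$ is simple: the case $n=0$ is precisely the hypothesis $\tp(a_0/\emptyset)\in\mathcal{P}$, and the inductive step is an instance of the following Key Lemma, applied with $b=c_{n-1}$ and $a=a_n$: \emph{if $\tp(b/\emptyset)$ is simple and $\tp(a/b)$ is simple, then $\tp(ab/\emptyset)$ is simple.} Granting the Key Lemma, $\tp(a/\emptyset)$ is simple by (ii), being the restriction of $\tp(c_n/\emptyset)$ to the sub-tuple $a_n$, and so $T$ is simple by (iii).

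To prove the Key Lemma, let $p'=\tp(ab/B)$ be an arbitrary extension of $\tp(ab/\emptyset)$. Since $\tp(b/\emptyset)$ is simple there is $A_0\subseteq B$ with $|A_0|\le|T|$ such that $\tp(b/B)$ does not divide over $A_0$; since $\tp(a/b)$ is simple and $\tp(a/Bb)$ extends it, there is $A_1\subseteq B$ with $|A_1|\le|T|$ such that $\tp(a/Bb)$ does not divide over $A_1\cup\{b\}$ (here one uses that the base may be enlarged, by monotonicity of non-dividing). Put $A=A_0\cup A_1$; then $|A|\le|T|$, the type $\tp(b/B)$ does not divide over $A$, and $\tp(a/Bb)$ does not divide over $A\cup\{b\}$. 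The goal is then to conclude that $\tp(ab/B)$ does not divide over $A$.

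This last step is the main obstacle and carries the real content. In an arbitrary theory non-dividing is not transitive, so one cannot simply chain ``$b$ does not divide over $A$'' with ``$a$ does not divide over $A\cup\{b\}$'' to obtain ``$ab$ does not divide over $A$''. The point is to use that both ingredient types are simple in order to recover, locally along the coordinatising data, the relevant part of the independence calculus --- symmetry and transitivity of non-dividing for extensions of these particular simple types, and the availability of suitable Morley-style sequences --- and this is precisely the work carried out in \cite{HKP}. An alternative, perhaps more transparent, route is to argue the contrapositive: given a formula $\psi(x,y)$ with the tree property (witnessed as in the definition above) and a realisation $d$ of some branch, peel off a coordinatising sequence of $d$ and show that the tree property descends to one of the ``relative'' types $\tp(a_i/a_{i-1})$, contradicting their simplicity; or, equivalently, run a Lascar-inequality estimate for the local dividing ranks along the coordinatising sequence. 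Finally, the supersimple ``furthermore'' clause is obtained by repeating the argument verbatim with ``$|A|<\aleph_0$'' in place of ``$|A|\le|T|$'': then $A_0$, $A_1$ and hence $A$ are finite, and the same amalgamation step yields the conclusion.
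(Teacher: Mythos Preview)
The paper does not give its own proof of this proposition: it is quoted verbatim as \cite[Proposition~4.2]{HKP} and then applied as a black box in the proof of Proposition~\ref{simple}. So there is no in-paper argument to compare your proposal against.

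Assessing your proposal on its own terms: the reduction to the Key Lemma is correct and is indeed the shape of the argument in \cite{HKP}. Facts (i)--(iii) are fine, and the inductive scheme along the coordinatising sequence is set up correctly. The genuine gap is that you do not prove the Key Lemma. At the decisive point --- showing that $\tp(b/B)$ not dividing over $A$ together with $\tp(a/Bb)$ not dividing over $Ab$ implies $\tp(ab/B)$ does not divide over $A$ --- you correctly observe that bare non-dividing is not transitive, and then write that recovering the needed local symmetry/transitivity for simple types ``is precisely the work carried out in \cite{HKP}''. But that work \emph{is} the content of the proposition you are asked to prove; invoking it here is circular. The two ``alternative routes'' you sketch (pushing a tree-property witness down through a coordinatising sequence, or a Lascar-type rank inequality) are not carried out either, and each hides the same nontrivial step: one must first establish, for simple types in a possibly non-simple ambient theory, enough of the forking calculus (existence of Morley sequences, symmetry, transitivity on the left) to make either argument go through. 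That development occupies the earlier sections of \cite{HKP} and is not reproducible in a paragraph. As written, your proposal is an accurate outline of the strategy, but not a proof.
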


We turn now to prove the following. We will work in $\underset{p\in\Prime}{\prod} \mathbb{Q}_p \diagup \mathcal{U}$.

\begin{propn}\label{simple}
Fix $b \in \Z ^+$ and let $\mathbf{R}$ be the ultraproduct $\underset{p\in\Prime}{\prod} \Z / p^b \Z \diagup \mathcal{U}$, where $\mathcal{U}$ is a non-principal ultrafilter on $\Prime$. Then $\mathbf{R}$ has supersimple theory.
\end{propn}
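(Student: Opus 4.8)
The plan is to present $\mathbf{R}$ as a structure interpretable in the Henselian valued field $K^{\ast}:=\underset{p\in\Prime}{\prod}\Qp/\mathcal{U}$, and then to deduce supersimplicity of $\Th(\mathbf{R})$ \emph{not} by transfer along the interpretation (note $K^{\ast}$ is not simple, as its value group $\Gamma$ is a model of Presburger arithmetic and hence has SOP) but via the coordinatisation criterion, Proposition~\ref{coordinatisation}. By {\L}o\'{s}'s theorem $K^{\ast}$ is a Henselian valued field whose residue field is the pseudofinite field $k:=\underset{p\in\Prime}{\prod}\mathbb{F}_{p}/\mathcal{U}$ and whose value group is $\Gamma$; since $k$ has characteristic $0$, $K^{\ast}$ has characteristic $(0,0)$, and as it carries an angular component map we regard it in the Denef--Pas language. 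Exactly as in Propositions~\ref{nip} and~\ref{ntp2}, and using that $b$ is a fixed standard integer (so that $p^{b}\Z_{p}=\mathfrak{m}_{p}^{b}$ and $\mathfrak{m}^{b}$ is a definable ideal of the valuation ring $\mathcal{O}$ of $K^{\ast}$), the ring $\Z/p^{b}\Z\iso\Z_{p}/p^{b}\Z_{p}=\mathcal{O}_{p}/\mathfrak{m}_{p}^{b}$ is interpreted in $\Qp$ uniformly in $p$; hence $\mathbf{R}\iso\mathcal{O}/\mathfrak{m}^{b}$ is interpretable in $K^{\ast}$.

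Next I would coordinatise $\mathbf{R}$ by residue-field types, working inside a saturated $\mon=\mon\eq$. Since $\mathbf{R}=\mathcal{O}/\mathfrak{m}^{b}$ is a local ring with nilpotent maximal ideal, the $\mathfrak{m}$-adic filtration $\mathbf{R}=\mathfrak{m}^{0}/\mathfrak{m}^{b}\supseteq\mathfrak{m}/\mathfrak{m}^{b}\supseteq\cdots\supseteq\mathfrak{m}^{b-1}/\mathfrak{m}^{b}\supseteq 0$ is $\emptyset$-definable in the ring language (indeed $\mathfrak{m}/\mathfrak{m}^{b}=\{x:x^{b}=0\}$ and $\mathfrak{m}^{i}/\mathfrak{m}^{b}=(\mathfrak{m}/\mathfrak{m}^{b})^{i}$), and each successive quotient $\mathfrak{m}^{i}/\mathfrak{m}^{i+1}$ is a one-dimensional vector space over $k$, isomorphic to $k$ via multiplication by a uniformiser. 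For $a\in\mathbf{R}$ set $a_{i}:=a+\mathfrak{m}^{i+1}\in\mathbf{R}/\mathfrak{m}^{i+1}$ for $0\leq i\leq b-1$; then $(a_{0},\dots,a_{b-1})$ is a coordinatising sequence with $a_{b-1}=a$. Given $a_{i-1}$, the admissible values of $a_{i}$ form a coset of $\mathfrak{m}^{i}/\mathfrak{m}^{i+1}$, so that $\tp(a_{i}/a_{i-1})$ is, after the affine identification of that coset with $k$, a type concentrated on (a copy of) the residue-field sort; and $\tp(a_{0}/\emptyset)$ is literally a type in $\mathbf{R}/\mathfrak{m}\iso k$. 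By Proposition~\ref{Chat} the residue field $k$ is fully embedded in $K^{\ast}$, so the structure it induces --- and, passing to imaginaries, the structure induced on the sorts built from $k$ --- is just that of the pure field $k$; and the pure field $k=\underset{p\in\Prime}{\prod}\mathbb{F}_{p}/\mathcal{U}$ is pseudofinite, hence has supersimple theory of $\mathrm{SU}$-rank $1$ by the main theorem and Proposition~4.11 of \cite{CDM}. Consequently every $\tp(a_{i}/a_{i-1})$ is a supersimple type, and the class $\mathcal{P}$ of all types concentrated on the residue-field sort is closed under automorphisms.

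By Proposition~\ref{coordinatisation} it then follows that $\Th(\mathbf{R})$ is supersimple, as claimed. I expect the main obstacle to be the step just described: one must verify carefully that each $\tp(a_{i}/a_{i-1})$ genuinely \emph{is} a supersimple type --- that trivialising the fibre $a_{i}\in a_{i-1}+\mathfrak{m}^{i}/\mathfrak{m}^{i+1}$ by a choice of uniformiser, and passing to the imaginary sorts, reduces the type to one governed by the supersimple theory of the pure residue field, so that none of the non-simplicity (the SOP of the Presburger value group $\Gamma$) of the ambient $K^{\ast}$ creeps in. Concretely this amounts to lifting the full embeddedness of $k$ from Proposition~\ref{Chat} to $k\eq$ inside $(K^{\ast})\eq$, together with bookkeeping of the residue-field parameters used to identify the successive quotients with $k$.
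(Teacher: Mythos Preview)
Your proposal is correct and follows essentially the same approach as the paper: interpret $\mathbf{R}$ in the Denef--Pas structure $\prod_{p}\Qp/\mathcal{U}$, coordinatise via the $\mathfrak{m}$-adic filtration so that each successive step is governed by (a definable copy of) the residue field $k=\prod_{p}\mathbb{F}_{p}/\mathcal{U}$, invoke full embeddedness of $k$ (Proposition~\ref{Chat}) to conclude those types are supersimple, and apply Proposition~\ref{coordinatisation}. The only cosmetic difference is that the paper realises the coordinatising sequence inside $\mathbf{R}$ itself via $\alpha_{i}:=[p^{\,b-1-i}a_{p}+(p^{b})]$ (multiplication by $p^{\,b-1-i}$ having kernel exactly $\mathfrak{m}^{i+1}$), whereas you pass to the imaginary sorts $\mathbf{R}/\mathfrak{m}^{i+1}$; these are interdefinable and the argument is the same.
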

\begin{proof}

Since $\prod \pint / p^b \pint \diagup \mathcal{U} \iso \prod \Z / p^b\Z\diagup \mathcal{U}$, we will think about $\mathbf{R}$ inside the valued field structure $\mathbf{Q}= (\underset{p\in\Prime}{\prod} \mathbb{Q}_p\diagup \mathcal{U}, \Gamma, \prod \mathbb{F}_p\diagup\mathcal{U}, v,ac; \bar{\phi})$ where $\overline{\phi}$ is the definable function on $\underset{p\in\Prime}{\prod} \mathbb{Q}_p\diagup \mathcal{U}$ with $\overline{\phi}([x_p]_\mathcal{U}) = [p x_p]_\mathcal{U}$. Thus we want to show $\mathbf{R'} := \underset{p\in\Prime}{\prod} \pint / p^b \pint \diagup \mathcal{U} $ has supersimple theory with the induced structure in which the 0-definable relations on $\mathbf{R'}$ are all those arising from 0-definable relations on $\mathbf{Q}$.

In order to show that $\mathbf{R'}$ has a supersimple theory, by Proposition \ref{coordinatisation} it suffices to coordinatise it through supersimple types.
We will consider the class of types $\mathcal{P}$ of the form $\mathrm{tp}([p^ia_p + (p^b)] / [p^{i+1}a_p + (p^b)])$, for  $ [a_p + (p^b)] \in \mathbf{R'}$.

For any given $[a_p+ (p^b)]\in\mathbf{R'}$ we can have the coordinatising sequence $\alpha_0:=[p^{b-1}a_p + (p^b)] , \ldots,\alpha_{b-(i+1)}:=[p^{i}a_p+ (p^b)],\ldots , \alpha_{b-2}:=[p a_p + (p^b)], \alpha_{b-1}:=[a_p + (p^b)]$. Furthermore $\mathrm{tp}(\alpha_{b-i-1}/\alpha_{b-i-2}) = \mathrm{tp}([p^ia_p + (p^b)] / [p^{i+1}a_p + (p^b)])\subseteq\{x\in \mathbf{R'}: [p]x=[p^{i+1}a_p + (p^b)] \}$ which equals to a definable set, namely $S_{b-(i+1)} = [p^i a_p + p^{b-1}\mathbf{R'} + (p^b)] :=\{ [p^i a_p +p^{b-1}x + (p^b)] : x\in \mathbf{R'} \}$.  

Also there is a definable bijection $\vphi_{b-i-1}$ over $[p^i a_p + (p^b)]$ between $\prod \Z_p/p\Z_p \diagup \mathcal{U} $ and $S_{b-(i+1)}$. To see this, note that for $c_p \in \Z_p / p^b \Z_p$ written as $c_p:= c_{p(0)} +p c_{p(1)}+\ldots + p^{b-1} c_{p(b-1)} + (p^b) $ we can define, uniformly on $p$, a bijection $\vphi_{b-i-1,p}$ from $\Z_p / p \Z_p$ to $S_{b-i-1,p}:=\{z\in \Z_p /p^b\Z_p : p z= p^{i+1} c_p + (p^b) \}= p^i c_p + p^{b-1}\Z_p/p^b\Z_p + (p^b)$ as follows $\vphi_{b-i-1,p}(x):= p^{b-1} x + p^i c_p + (p^b) $.
Therefore we can find the wanted definable bijection $\vphi_{b-i-1}$ as the induced by $\vphi_{b-i-1,p}$ sending $[x_p]\in\prod \Z_p/p\Z_p \diagup \mathcal{U} $ to $[p^i a_p + p^{b-1} x_p + (p^b)]$.

Moreover $\prod \Z_p/p\Z_p \diagup \mathcal{U}$ is a pseudofinite field stably embedded in $\mathbf{Q}$, so all the types realised in $\prod \Z_p/p\Z_p \diagup \mathcal{U}$ are supersimple. Hence all the types of the form $\mathrm{tp}(\alpha_i/\alpha_{i-1})$ are also supersimple.

We note that $\mathcal{P}$ is closed under automorphisms of $\mathbf{R}'$.

Finally, note that for an element $[a_p+ (p^b)]\in \mathbf{R'}$ the sequence $\alpha_0= [p^{b-1} a_p + (p^b)]$, $\alpha_1= [p^{b-2}a_p + (p^b)]$, $\ldots$, $\alpha_{b-1}=[a_p + (p^b)]$ is a coordinatising sequence and we have that $\mathrm{tp}(\alpha_i/\alpha_{i-1})$ is in  $\mathcal{P}$ for $0\leq i<b-1$. 

Now we can apply Proposition \ref{coordinatisation} to conclude that since $\mathbf{R'}$ is coordinatised by supersimple types then it is supersimple.

\end{proof}

\begin{rem}
It is noted in Remark 4.3 of \cite{HKP} that if $(a_i: i\leq n)$ is a coordinatising sequence then $\mathrm{SU}(a_n)\leq \mathrm{SU}(a_n/ a_{n-1}) \oplus \ldots \oplus \mathrm{SU}(a_0)$. Hence the structure $\underset{p\in\Prime}{\prod} \Z / p^b \Z \diagup \mathcal{U}$ has finite $\mathrm{SU}$-rank and this rank is at most $b$ since the coordinatisatising sequence used in the proof of Proposition \ref{simple} has length $b$ and each of the types of the sequence has $\mathrm{SU}$-rank $1$, cf. \cite{HRU}. 
Furthermore we have that $\underset{p\in \Prime}{\prod} p^{b-1}\Z_p/p^b\Z_p \diagup \mathcal{U} \unlhd \underset{p\in\Prime}{\prod}\Z_p /p^b\Z_p \diagup \mathcal{U}$ has infinitely many cosets and also we have that $\underset{p\in\Prime}{\prod} ((\Z_p/p^b\Z_p) / (p^{b-1}\Z_p/p^b\Z_p ))\diagup \mathcal{U} \iso \underset{p\in\Prime}{\prod} \Z_p / p^{b-1}\Z_p \diagup \mathcal{U} $. Similarly the ideal $\underset{p\in\Prime}{\prod} p^{b-(i+1)}\Z_p/p^{b-i}\Z \diagup \mathcal{U}$ of $\underset{p\in\Prime}{\prod} \Z_p/p^{b-i}\Z_p \diagup \mathcal{U}$ has infinitely many infinite cosets. This last argument translates into a (dividing)forking sequence of types of length $b$ which means that $\mathrm{SU}(a_n)\geq b$.
Hence for any given $a\in\underset{p\in\Prime}{\prod} \pint / p^b \pint \diagup \mathcal{U}$ we have that $\mathrm{SU}(a)=b$.
\end{rem}

We can now use Proposition \ref{simple} together with Lemma \ref{CRT} to cover the following more general case.

\begin{cor}\label{simplecase}
Consider an ultrafilter on $\mathbb{N}$ such that there exists $b\in\mathbb{N}$ and $ U\in \mathcal{U}$ such that if $n\in U$ then $n$ is a product of less than $b$ primes and if $p^l$ divides $n$ then $l<b$. Then $R':= \prod \Z/n\Z \diagup \mathcal{U}$ is supersimple, of finite $\mathrm{SU}$-rank.
\end{cor}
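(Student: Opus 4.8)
The plan is to reduce the general case to Proposition \ref{simple} by the same two-step passage to a subset of the ultrafilter already used in Corollaries \ref{NIPcase} and \ref{NTP2case}. First I would set $R' := \prod_{n\in\mathbb{N}} \Z/n\Z \diagup \mathcal{U}$, pick $U\in\mathcal{U}$ and $b\in\mathbb{N}$ as in the hypothesis, and replace $\mathcal{U}$ by the induced ultrafilter $\mathcal{V} := \{U\cap V : V\in\mathcal{U}\}$ on $U$, so that $R' \iso \prod_{n\in U} \Z/n\Z \diagup \mathcal{V}$. Since every $n\in U$ has fewer than $b$ prime divisors, only finitely many values are possible for the number of distinct prime divisors, so there is $V\in\mathcal{V}$ and a fixed $d<b$ such that every $n\in V$ has exactly $d$ distinct prime divisors; passing to the induced ultrafilter $\mathcal{W}$ on $V$ we get $R' \iso \prod_{n\in V}\bigl(\Z/p_{n(1)}^{e_{n(1)}}\Z \times\cdots\times \Z/p_{n(d)}^{e_{n(d)}}\Z\bigr)\diagup\mathcal{W}$ by the Chinese Remainder Theorem applied termwise, where $p_{n(1)}<\cdots<p_{n(d)}$ are the prime divisors of $n$ and $p_{n(j)}^{e_{n(j)}}$ is the exact power of $p_{n(j)}$ dividing $n$. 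By the hypothesis each exponent $e_{n(j)}$ satisfies $e_{n(j)}<b$.

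Next I would apply Lemma \ref{CRT} to this product decomposition to obtain
\[
R' \iso \Bigl(\prod \Z/p_{n(1)}^{e_{n(1)}}\Z \diagup \mathcal{W}_1\Bigr) \times\cdots\times \Bigl(\prod \Z/p_{n(d)}^{e_{n(d)}}\Z \diagup \mathcal{W}_d\Bigr)
\]
for suitable ultrafilters $\mathcal{W}_1,\ldots,\mathcal{W}_d$ on the appropriate index sets of pairs $(p,e)$ with $e<b$. Each factor is an ultraproduct of rings of the form $\Z/p^e\Z$ with $e<b$; since $e$ ranges over a finite set, a further refinement of each $\mathcal{W}_j$ concentrates on a single exponent value $e_j<b$, giving an ultraproduct of rings $\Z/p^{e_j}\Z$ over a set of primes. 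If this set of primes is a single prime, the factor is a finite ring, which is trivially supersimple of $\mathrm{SU}$-rank $0$; otherwise it is of the form $\prod_{p\in\Prime}\Z/p^{e_j}\Z\diagup\mathcal{U}'$ covered directly by Proposition \ref{simple}, hence supersimple of finite $\mathrm{SU}$-rank (at most $e_j<b$). Finally, by the Proposition following Corollary \ref{NIPcase}, a finite product of rings is supersimple if and only if each factor is, and by the same coordinatisation/$\mathrm{SU}$-rank bookkeeping the $\mathrm{SU}$-rank of $R'$ is finite (bounded by $\sum_j e_j < db \le b^2$, say), so $R'$ is supersimple of finite $\mathrm{SU}$-rank.

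The only mild subtlety — and the step I would be most careful about — is the bookkeeping when passing between ultrafilters on $\mathbb{N}$, on $V$, and on sets of pairs $(p,e)$: one must check that the isomorphisms respect the indexing, that the $\pi_j(\mathcal{W})$ appearing in Lemma \ref{CRT} are genuine (non-principal, where relevant) ultrafilters, and that the case where a factor becomes a finite ring (when the projected index set is finite, e.g. because all $n\in V$ share the same small prime $p_{n(j)}$) is handled separately since Proposition \ref{simple} is stated for $\prod_{p\in\Prime}$. None of this is deep; it is the routine "descend to a definable piece of the ultrafilter and apply CRT" argument already rehearsed twice in Section 2, now combined with the simple-case input from Proposition \ref{simple} in place of the NIP/NTP2 inputs. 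The finiteness of the $\mathrm{SU}$-rank then follows from Remark 4.3 of \cite{HKP} as in the preceding remark, since we have a product of finitely many factors each of finite $\mathrm{SU}$-rank.
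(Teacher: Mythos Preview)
Your proposal is correct and follows essentially the same route as the paper: pass to a set in the ultrafilter where the number of prime divisors and the exponents are fixed, apply the Chinese Remainder Theorem and Lemma \ref{CRT} to split into a finite product, and invoke Proposition \ref{simple} on each factor. The only cosmetic difference is that the paper fixes both the number $d$ of prime divisors and the exponent tuple $(e_1,\ldots,e_d)$ in a single refinement step before applying Lemma \ref{CRT}, whereas you fix $d$ first, split, and then refine each factor separately; your version is slightly more explicit about the degenerate case where a projected ultrafilter is principal.
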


First note that for example the ultrafilter $\mathcal{U}$ might include the set $$\{n\in\mathbb{N}:\ If\ p^e \mid n\ then\ e\leq b ,\ and\ n\ has\ less\ than\ b\ prime\ divisors\}.$$

\begin{proof}
Let $U\in\mathcal{U}$ be as in the hypothesis, and consider $\mathcal{V}$ the induced ultrafilter on $U$ by $\mathcal{U}$. We put $R:=\underset{n\in U}{\prod} \Z/n\Z \diagup \mathcal{V}$, so $R\iso R'$.
There is $V\in\mathcal{V}$ such that every $n\in V$ has the same number $d$ of prime factors, $p_{n(1)},p_{n(2)},\cdots p_{n(d)}$ and every $p_{n(i)}$ has the same exponent $e_i$. Hence using Lemma \ref{CRT} on $\mathcal{W}$ the induced ultrafilter on $V$ by $\mathcal{V}$ we have that $R\iso (\underset{m}{\prod} (\Z/p_{m(1)}^{e_1}\Z)\diagup \mathcal{W}_1)\times\cdots\times (\underset{m}{\prod}(\Z/p_{m(d)}^{e_d}\Z)\diagup\mathcal{W}_d)$. Since each one of $\underset{m}{\prod} (\Z/p_{m(i)}^{e_i}\Z)\diagup\mathcal{W}_i$ is supersimple by Proposition \ref{simple}, $R'$ is supersimple.

\end{proof}

\begin{rem}
The ring $\underset{m}{\prod}\Z/p_{m(1)}^{e_1}\Z\diagup \mathcal{W}_1 \times\ldots\times \underset{m}{\prod}\Z/p_{m(d)}^{e_d}\Z\diagup \mathcal{W}_d$ has $\mathrm{SU}$-rank exactly $e_1+\ldots+e_d$.
\end{rem}

\begin{rem}
There is an alternative, maybe more direct, way of proving Proposition \ref{simple}. We present here a brief sketch of the proof for $R:=\underset{p}{\prod} \Z_p/p^2\Z_p \diagup \mathcal{U}$, but the argument also yields Corollary \ref{simplecase}.

We note by the Ax-Kochen-Er{\v{s}}ov theorem that $\underset{p\in\Prime}{\prod}\Z/p^2\Z\diagup \mathcal{U}$ is elementary equivalent to $R':= \underset{p\in\Prime}{\prod}\mathbb{F}_p[[t]]/t^2\diagup \mathcal{U}$ in the language of rings, cf. Proposition 2.4.10 of \cite{vdD}. 

We have $R'$ is interpretable in $k:= \underset{p}{\prod} \mathbb{F}_p\diagup \mathcal{U}$. In order to see this we only need to note that for any prime $q$ the ring $\mathbb{F}_q[[t]]/t^2$ is uniformly interpretable in $\mathbb{F}_q$ since we can identify $a+bt +t^2\mathbb{F}_q[[t]] \in \mathbb{F}_q[[t]]/t^2$ with the pair $(a,b)$ and interpret addition, ($\oplus$), and multiplication, ($\ast$), from $\mathbb{F}_q[[t]]/t^2$  inside $\mathbb{F}_q \times \mathbb{F}_q$ in the following way.
\begin{itemize}
\item For pairs $(a,b),(c,d)$ we put $(a,b)\oplus (c,d):=(a+c, b+d)$;
\item For pairs $(a,b),(c,d)$ we put $(a,b)\ast (c,d)= (ac, ad+bc )$.
\end{itemize}

Since $k$ is supersimple of $\mathrm{SU}$-rank 1, $R'$ is supersimple of $\mathrm{SU}$-rank 2 and hence $R$ is also supersimple of $\mathrm{SU}$-rank 2.

Although $k$ is interpretable in $R'$ the isomorphism from $(k^2,\oplus,\ast)$ to $R'$ is not definable in $R'$. Otherwise, it would be also definable in $R$ and thus we would get that for some $U\in\mathcal{U}$ if $p\in\U$ then the ring $\Z_p/p^2\Z$ is isomorphic to the characteristic $p$ ring $((\Z/p\Z)^2, \oplus,\ast )$ but this is a contradiction.

In Proposition \ref{simple} we chose to give the proof using coordinatisation through supersimple types because we believe it provides added information for the class of finite rings of the form $\Z/p^n\Z $. In particular, it may prove useful towards showing that for a fixed $d$ the class of rings $\{ \Z/p^d\Z : p\in\Prime \}$ is a (weak) asymptotic class in the sense of \cite{Elw}, see also \cite{EM}. 
\end{rem}

\section{TP2 case}

Not every ultraproduct of finite residue rings is NTP2.
First, we note the following. The proof is routine and omitted here.

\begin{clm}\label{SLprodcommute}
If $\{R_i: i\in I\}$ is a collection of rings, then $SL_2(\underset{i\in I}{\prod}R_i/\mathcal{U})\cong \underset{i\in I}{\prod} (SL_2(R_i))/\mathcal{U}$.
\end{clm}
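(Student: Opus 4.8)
The plan is to verify that the natural isomorphism of rings $R := \prod_{i\in I} R_i/\mathcal{U} \cong \prod_{i\in I} R_i/\mathcal{U}$ at the level of entries descends to a bijection between $2\times 2$ matrices of determinant $1$. First I would recall that $M_2(R) \cong \prod_{i\in I} M_2(R_i)/\mathcal{U}$: a $2\times 2$ matrix over $R$ is just a $4$-tuple of elements of $R$, each of which is (the $\mathcal{U}$-class of) a $4$-tuple of elements of the $R_i$, and regrouping coordinates gives the isomorphism $\Psi$. One checks that $\Psi$ is a ring homomorphism because matrix addition and multiplication are given by finitely many applications of the ring operations $+,\cdot$, which are computed coordinatewise in an ultraproduct; equivalently, $M_2(-)$ is an interpretation (without parameters) in the language of rings, and interpretations commute with ultraproducts.

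Next I would observe that the determinant map $\det\colon M_2(R)\to R$, $\det\begin{pmatrix} a & b \\ c & d\end{pmatrix} = ad - bc$, is also given by a term in the ring operations, hence is also computed coordinatewise under $\Psi$: that is, $\det(\Psi([A_i]_{\mathcal U})) = [\det(A_i)]_{\mathcal U}$. Therefore, for $[A_i]_{\mathcal U}\in M_2(R)$, we have $\det(\Psi([A_i]_{\mathcal U})) = 1_R = [1_{R_i}]_{\mathcal U}$ if and only if $\{i\in I : \det(A_i) = 1_{R_i}\}\in\mathcal{U}$, i.e.\ if and only if $[A_i]_{\mathcal U}$ lies in (the image under $\Psi$ of) $\prod_{i\in I} SL_2(R_i)/\mathcal{U}$, where the latter ultraproduct is taken inside $\prod_{i\in I} M_2(R_i)/\mathcal{U}$ and is well defined since whether $A_i\in SL_2(R_i)$ depends only on the $\mathcal{U}$-class. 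Restricting the ring isomorphism $\Psi$ (well, its multiplicative-monoid reduct, since $SL_2$ is not closed under addition) to this $\mathcal{U}$-large set gives the desired group isomorphism, where one also checks that matrix inversion on $SL_2$ — given by $\begin{pmatrix} a & b \\ c & d\end{pmatrix}^{-1} = \begin{pmatrix} d & -b \\ -c & a\end{pmatrix}$ when $ad-bc=1$ — is again a term and hence commutes with $\Psi$.

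There is essentially no hard obstacle here; this is why the paper calls the proof routine. The only point deserving mild care is bookkeeping: making precise the three nested ultraproduct identifications ($R$, $M_2(R)$, and $SL_2(R)$) and checking that the relevant maps are definable by quantifier-free terms so that {\L}o\'s's theorem (or the general fact that parameter-free interpretations commute with ultraproducts) applies cleanly. One could present this most economically by noting that the group $SL_2$, together with its multiplication and inversion, is $\emptyset$-interpretable in any ring, and that $\emptyset$-interpretable constructions commute with ultraproducts; then the claim is immediate. I would write it out at the level of the term computations for $\det$ and for matrix multiplication to keep the argument self-contained, and leave the verification that these respect $\mathcal{U}$-classes to the reader, as the paper does.
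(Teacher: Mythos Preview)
Your proposal is correct and is precisely the standard argument the paper has in mind; the paper in fact omits the proof entirely, calling it ``routine.'' Your packaging via the observation that $SL_2$ with its multiplication and inversion is $\emptyset$-interpretable in any ring, so that the construction commutes with ultraproducts, is the cleanest way to say it.
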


For an analogous result (over fields, but with arbitrary groups of Lie type) see Proposition 1 of \cite{POI}.

Now we will present a useful necessary condition for a theory to be NTP2, from \cite{CKS}.

\begin{lem}\label{lemma}
Let $T$ be $NTP2$, $G$ a definable group in $\M \models T$ and $(H_i)_{i\in\omega}$ a uniformly definable family of normal subgroups of $G$, with $H_i=\vphi(x,a_i)$. Let $H:=\underset{i\in\omega}{\bigcap} H_i$, and put $H_{\neq j}:=\underset{i\in \omega\setminus\{ j \}}{\bigcap} H_i$. Then there is some $i^*\in\omega$ such that, $[H_{\neq i^*}:H]$ is finite.
\end{lem}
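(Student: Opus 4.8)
The plan is to argue by contradiction, extracting a TP2-pattern from the failure of the conclusion. So suppose that $[H_{\neq j}:H]$ is infinite for \emph{every} $j\in\omega$; I will build a single formula and an array witnessing TP2 in $T$, contradicting the hypothesis.

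First I would record the identity $H_{\neq j}\cap H_j=\underset{i\in\omega}{\bigcap}H_i=H$, so the assumed index $[H_{\neq j}:H]$ equals $[H_{\neq j}:H_{\neq j}\cap H_j]$; thus, inside the group $H_{\neq j}$, the subgroup $H_{\neq j}\cap H_j$ has infinitely many cosets. Working in the monster model $\overline{\M}$, for each $j$ I would choose elements $g_{j,n}\in H_{\neq j}$, $n\in\omega$, lying in pairwise distinct cosets of $H_j$ (equivalently, of $H_{\neq j}\cap H_j$ within $H_{\neq j}$).

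Next, set $\psi(x;y,z):=\vphi(y^{-1}x,z)$, which when $z=a_i$ expresses ``$x\in y\cdot H_i$'' (normality of $H_i$ lets us ignore the left/right distinction), and form the array $b_{j,n}:=(g_{j,n},a_j)$. For clause i) of Definition \ref{DefNTP2}: for fixed $j$ and $n\neq n'$ the cosets $g_{j,n}H_j$ and $g_{j,n'}H_j$ are disjoint, so $\{\psi(x;b_{j,n}):n\in\omega\}$ is $2$-inconsistent, giving $l=2$. For clause ii): given $\eta\in\omega^\omega$ and a finite $F=\{j_1,\dots,j_m\}\subseteq\omega$, I would check that $x:=g_{j_1,\eta(j_1)}g_{j_2,\eta(j_2)}\cdots g_{j_m,\eta(j_m)}$ lies in $g_{j_k,\eta(j_k)}H_{j_k}$ for each $k$: passing to $G/H_{j_k}$ (valid since $H_{j_k}\trianglelefteq G$), every factor $g_{j_l,\eta(j_l)}$ with $l\neq k$ lies in $H_{\neq j_l}\subseteq H_{j_k}$ and so is trivial modulo $H_{j_k}$, whence $x\equiv g_{j_k,\eta(j_k)}\pmod{H_{j_k}}$. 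Hence $\{\psi(x;b_{j,\eta(j)}):j\in\omega\}$ is finitely satisfiable, so consistent by compactness and saturation of $\overline{\M}$. This contradicts the assumption that $T$ is NTP2, so some $i^*$ with $[H_{\neq i^*}:H]$ finite must exist.

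The step I expect to be the crux is clause ii): one must guarantee that an arbitrary transversal choice of one coset per row has nonempty intersection, and the displayed product works precisely because each $g_{j,n}$ was selected inside $H_{\neq j}$, making it invisible to every $H_i$ with $i\neq j$. The remaining points are routine: verifying the $2$-inconsistency, checking that $\psi$ is genuinely one formula over the parameter set $\{g_{j,n}:j,n\in\omega\}\cup\{a_j:j\in\omega\}$, and observing that only finitely many cosets need be intersected at a time. If some $[H_{\neq j}:H]$ is already finite there is nothing to prove, so running the argument against the negation of the conclusion is exactly what is required.
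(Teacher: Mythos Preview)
The paper does not supply its own proof of this lemma; it is simply quoted from \cite{CKS}. Your argument is correct and is essentially the standard one: the crucial choice of the representatives $g_{j,n}$ inside $H_{\neq j}$ ensures that in the quotient $G/H_{j_k}$ every factor of your product except the $k$-th collapses to the identity, which is precisely what yields path-consistency, while $2$-inconsistency along rows is immediate from the cosets being distinct.
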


\begin{propn}\label{TP2case}
Let $\mathcal{U}$ be an ultrafilter on $\mathbb{N}$ such that for every $b\in\mathbb{N}$ the set $$\{n\in\mathbb{N}:\ there\ are\ at\ least\ b\ prime\ divisors\ of\ n\}$$ is in $\mathcal{U}$. Then $\prod \Z/n\Z \diagup \mathcal{U}$ has TP2 theory.
\end{propn}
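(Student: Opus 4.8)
The plan is to contradict Lemma \ref{lemma}, thereby showing that $T:=\Th\big(\prod\Z/n\Z\diagup\U\big)$ is not NTP2 and hence has TP2. I will work in $R:=\prod\Z/n\Z\diagup\U$ (or in a sufficiently saturated $\overline{\M}\succeq R$) and take the definable group $G:=SL_2$, which by Claim \ref{SLprodcommute} together with the Chinese Remainder Theorem satisfies $G\iso \prod\big(\prod_{p\mid n}SL_2(\Z/p^{v_p(n)}\Z)\big)\diagup\U$; thus the ``coordinates'' of $G$ are indexed by the prime divisors of $n$, and inside the ring $R$ these coordinates are singled out by primitive idempotents. For a primitive idempotent $e\in R$ let $N_e:=\{g\in G:\ e(g-I)=0\}$. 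Since the scalar matrix $eI$ is central in $M_2(R)$, the set $N_e$ is exactly the kernel of the (well-defined) group homomorphism $SL_2(R)\to SL_2(eR)$, $g\mapsto eg$, hence a normal subgroup of $G$; and it is defined by the single formula $\vphi(x,a)\equiv\text{``}x\in SL_2\wedge a\cdot(x-I)=0\text{''}$ with parameter $e$, so varying $e$ gives a uniformly definable family of normal subgroups.

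The combinatorial core is to produce a sequence $(e_k)_{k<\omega}$ of pairwise orthogonal primitive idempotents of $\overline{\M}$ such that $e_k\overline{\M}$ is infinite for every $k$. I will get these by a compactness/saturation argument from the hypothesis on $\U$: the partial type over $\emptyset$ in variables $(z_k)_{k<\omega}$ asserting that the $z_k$ are pairwise orthogonal primitive idempotents and that each principal ideal $z_kR$ has at least $M$ elements (for every $M\in\omega$) is finitely satisfiable in $R$. Indeed a finite fragment mentions only $z_0,\dots,z_{K-1}$ and a single bound $M$, and to satisfy it in $\Z/n\Z$ it suffices to find $K$ pairwise orthogonal primitive idempotents whose factor rings have at least $M$ elements; writing the distinct prime divisors of $n$ as $r_1<\dots<r_m$ one has $r_i\ge p_i$ (the $i$-th prime), so once $n$ has enough prime divisors — and by hypothesis such $n$ lie in $\U$ — at least $K$ of the prime powers $r_i^{v_{r_i}(n)}$ exceed $M$, and the corresponding primitive idempotents do the job.

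Given such $(e_k)_{k<\omega}$, set $H_k:=N_{e_k}$, $H:=\bigcap_{k<\omega}H_k$ and $H_{\neq j}:=\bigcap_{k\neq j}H_k$. I will show $[H_{\neq j}:H]$ is infinite for every $j$, which contradicts Lemma \ref{lemma}. For $r\in e_j\overline{\M}$ consider the unipotent matrix $u_r\in SL_2(\overline{\M})$ with $(1,2)$-entry equal to $r$ and equal to the identity otherwise. Since $e_ke_j=0$ for $k\neq j$ and $e_jr=r$, one has $e_k(u_r-I)=0$ for all $k\neq j$, so $u_r\in H_{\neq j}$; and $u_ru_{r'}^{-1}=u_{r-r'}$, which lies in $H$ iff $e_j(r-r')=0$ iff $r=r'$. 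As $e_j\overline{\M}$ is infinite, the matrices $u_r$ fall into infinitely many distinct $H$-cosets, so $[H_{\neq j}:H]=\infty$. By Lemma \ref{lemma} this is impossible if $T$ is NTP2, so $T$ has TP2, as required. (The same scheme works with the additive group of $R$ and the annihilator ideals $\mathrm{Ann}(e_k)$ in place of $G$ and the $N_{e_k}$.)

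I expect the main obstacle to be the middle step: one must exhibit a genuinely uniformly definable family of normal subgroups whose shifted intersections all have infinite index, and the two delicate points are that the parameters $e_k$ are produced only by saturation (``$e_k\overline{\M}$ is infinite'' is a type, not a single formula) and that their largeness must be squeezed out of the lone hypothesis that the number of prime divisors of $n$ is $\U$-unbounded, using the elementary bound $r_i\ge p_i$ on the $i$-th smallest prime divisor.
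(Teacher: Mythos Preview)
Your argument is correct and, like the paper, derives TP2 by contradicting Lemma~\ref{lemma} inside $G=SL_2$; the difference lies in the choice of the uniformly definable family of normal subgroups. The paper picks, in each finite factor $SL_2(\Z/p_{j,k}^{e_{j,k}}\Z)$, a non-central diagonal element $A_{j,k}$, lifts it to $\overline{A_{j,k}}\in G_j\cong SL_2(\Z/j\Z)$, and sets $N_{j,k}:=C_{G_j}\bigl(\overline{A_{j,k}}^{\,G_j}\bigr)$; the bound $[N_{j,\neq k}:N_j]\ge p_{j,k}$ then needs a short explicit computation with the unipotent $\left(\begin{smallmatrix}1&1\\0&1\end{smallmatrix}\right)$, and the family $(H_i)_{i<\omega}$ is assembled coordinatewise in the ultraproduct. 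You instead isolate the prime-power coordinates by primitive idempotents $e_k$ and take $N_{e_k}=\ker\bigl(SL_2(R)\to SL_2(e_kR)\bigr)$, so that the relevant index is visibly $\ge|e_j\overline{\M}|=\infty$; your compactness/saturation production of the $e_k$ (using the elementary bound $r_i\ge p_i$ on the $i$-th prime divisor) replaces the paper's explicit coordinate choices. This is cleaner---no centraliser calculation is needed---and your closing parenthetical is a genuine simplification: taking $G=(R,+)$ with $H_k=\mathrm{Ann}(e_k)$ already contradicts Lemma~\ref{lemma}, so the detour through $SL_2$ and Claim~\ref{SLprodcommute} can be dropped entirely.
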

First we note these ultrafilters exist, since the collection $$\{\{n\in\mathbb{N}:\ there\ are\ at\ least\ b\ prime\ divisors\ of\ n\}: b\in \mathbb{N} \}$$ has the finite intersection property. 

\begin{proof}
Let $\mathcal{R} := \prod (\Z/n\Z) / \mathcal{U}$.
We want to find a definable group $G$ in $\mathcal{R}$ and a uniformly definable family of normal subgroups $\{H_i\}_{i<\omega}$ that contradicts the conclusion from Lemma \ref{lemma}, and  to do this we consider 
for each $j<\omega$ the group $$SL_2(\Z/j\Z)\iso G_j:=F_j\times SL_2\left( \Z/p_{j,1}^{e_{j,1}}\Z \right) \times \cdots \times SL_2\left( \Z/p_{j,b_j}^{e_{j,b_j}}\Z \right)$$ such that $F_j$ is isomorphic to $SL_2(\Z/ n\Z)$ where $n$ can only have prime factors smaller than  $p_{j,1}$, and both $b_i$ and $p_{i,1}$ increase as $i\rightarrow\infty$, in such a way that $p_{j,k}> p_{j',k'}$ whenever $j>j'$ or both $ j=j'$ and $k>k'$.

We can now for a given $j$ and $k$ with $1\leq k \leq b_j$ find non-central elements $A_{j,k}= \left( \begin{smallmatrix} a_{j,k} & 0 \\ 0 & b_{j,k} \end{smallmatrix} \right) \in SL_2\left( \Z/p_{j,k}^{e_{j,k}}\Z \right)$ with $a_{j,k}\neq b_{j,k}$. Consider $A_{j,k}$ inside $G_j$ occurring as the $k+1$-th entry in $$\overline{A_{j,k}}= \left( Id_{F_j},Id_{SL_2\left( \Z/p_{j,1}^{e_{j,1}}\Z \right)}, \ldots ,A_{j,k},\ldots, Id_{SL_2\left( \Z/p_{j,b_j}^{e_{j,b_j}}\Z \right)} \right)$$  

Now let $\mathcal{F}_{j,k}$ be the conjugacy class in $G_j$ of $\overline{A_{j,k}}$. This is uniformly definable (over $j$) using $\overline{A_{j,k}}$ as a parameter. Elements in $\mathcal{F}_{j,k}$ are of the form $(1, Id, \ldots, gA_{j,k}g^{-1},\ldots,Id)$ where $g\in SL_2\left( \Z/p_{j,k}^{e_{j,k}}\Z \right)$.

Consider now $N_{j,k}=C_{G_j}(\mathcal{F}_{j,k})$. Then this is a normal subgroup of $G_j$ since for every $\alpha \in G_j$ and every $\gamma\in N_{j,k}$ we have that $\alpha\mathcal{F}_{j,k}\alpha^{-1}=\mathcal{F}_{j,k} $ so $\alpha\gamma\alpha^{-1}\in N_{j,k}$. Furthermore $N_{j,k}=C_{G_j}(\mathcal{F}_{j,k})$ is uniformly definable using $\overline{A_{j,k}}$ through the formula defining the centralizer, namely $\vphi(x,\overline{A_{j,k}}):= \forall g\in G_j (x g \overline{A_{j,k}}g^{-1}x^{-1}=g \overline{A_{j,k}}g^{-1}) $. We have $$N_{j,k}= F_j \times SL_2\left( \Z/p_{j,1}^{e_{j,1}}\Z \right)\times\ldots\times C_{SL_2\left( \Z/p_{j,k}^{e_{j,k}}\Z \right)} (\mathcal{F}_{j,k})\times \ldots \times SL_2\left( \Z/p_{j,b_j}^{e_{j,b_j}}\Z \right).$$

Define $N_{j,\neq k}=\underset{l\neq k}{\bigcap} N_{j,l}$ and $N_j=\underset{l}{\bigcap} N_{j,l}$. Then the index $$[N_{j,\neq k}:N_j]=[SL_2\left( \Z/p_{j,k}^{e_{j,k}}\Z \right): C_{SL_2\left( \Z/p_{j,k}^{e_{j,k}}\Z \right)}(A_{j,k}^{SL_2\left( \Z/p_{j,k}^{e_{j,k}}\Z \right)})]$$ is at least $p_{j,k}$.
Indeed, if $[N_{j,\neq k}:N_j]=\lambda$ then for all $B\in SL_2\left( \Z/p_{j,k}^{e_{j,k}}\Z \right)$ we have that $B^\lambda\in C_{SL_2\left( \Z/p_{j,k}^{e_{j,k}}\Z \right)}(A_{j,k}^{SL_2\left( \Z/p_{j,k}^{e_{j,k}}\Z \right)})$ which means that in particular $B^\lambda A_{j,k} B^{-\lambda}=A_{j,k}$. Considering the matrix $D=\left(\begin{smallmatrix} 1 & 1\\ 0 & 1 \end{smallmatrix}\right)$, $D^\lambda A_{j,k} D^{-\lambda}= \left(\begin{smallmatrix} a_{j,k} & \lambda (b_{j,k}-a_{j,k}) \\ 0 & b_{j,k} \end{smallmatrix} \right)= A_{j,k}$ only if $\lambda (b_{j,k}-a_{j,k})$ is a multiple of $p_{j,k}^{e_{j,k}}$.
 Hence the index $[N_{j,\neq k}:N_j]$ is at least $p_{j,k}$. And we have $p_{j,k}\rightarrow \infty$ as $j\rightarrow \infty$.
 
We want to consider $H=\underset{j}{\prod} N_j / \mathcal{U} = \underset{j}{\prod}\left(\underset{l}{\bigcap} N_{j,l} \right)/ \mathcal{U} $ and $H_{\neq k}=\underset{j}{\prod}N_{j,\neq k} / \mathcal{U} = \underset{j}{\prod} \left(\underset{l\neq k}{\bigcap} N_{j,l} \right)/\mathcal{U}$ and we must show that the index $[H_{\neq k}:H]$ is infinite. But we know that $[N_{j,\neq k}:N_j]\geq p_{j,k}$ for every $j$. Also if $j'>j$ then $[N_{j',\neq k}:N_{j'}]\geq p_{j',k} > p_{j,k}$. By \L$ o\acute{s}$'s Theorem we have $\left[ \prod(N_{j,\neq k})/\mathcal{U} : \prod (N_j)/\mathcal{U} \right] \geq p_{j,k}$ for each $p_{j,k}$ 
hence in the ultraproduct the index $[H_{\neq k}:H]$ is infinite, for all $k$.
Hence we just need to show that $H=\underset{i}{\bigcap} H_i$ and $H_{\neq k}=\underset{i\neq k}{\bigcap} H_i $ for some family of uniformly definable normal subgroups $\{H_i : i<\omega \}$. Put $H_i= \underset{j}{\prod} N_{j,i}/\mathcal{U}$ and recall that $H=\underset{j}{\prod}\left(\underset{l}{\bigcap} N_{j,l} \right)/\mathcal{U}$ and $H_{\neq k}= \underset{j}{\prod} \left(\underset{l\neq k}{\bigcap} N_{j,l} \right)/\mathcal{U}$ and that $\underset{j}{\prod}\left(\underset{l}{\bigcap} N_{j,l} \right)/\mathcal{U} = \underset{l}{\bigcap}\left(\underset{j}{\prod} N_{j,l}/\mathcal{U} \right)$ and $\underset{j}{\prod} \left(\underset{l\neq k}{\bigcap} N_{j,l} \right)/\mathcal{U}=\underset{l\neq k}{\bigcap} \left(\underset{j}{\prod} N_{j,l} /\mathcal{U}\right)$. Thus, by Lemma \ref{lemma} applied to the family of groups $\{ \underset{j}{\prod} N_{j,i}\}_{i<\omega}$ the ring $\prod \Z/n\Z /\mathcal{U}$ has the tree property of the second kind.

\end{proof}

\bibliographystyle{abbrv} 
\bibliography{ReferencesTR}

\end{document}